\definecolor{red}{rgb}{1.00,0.00,0.00}
{\numberwithin{equation}{section}
\setlength{\parindent}{1em}

\newtheorem{theorem}{Theorem}[section]
\newtheorem{lemma}{Lemma}[section]
\newtheorem{remark}{Remark}[section]

\newcommand{\normmm}[1]{{\left\vert\kern-0.25ex\left\vert
\kern-0.25ex\left\vert #1
    \right\vert\kern-0.25ex\right\vert\kern-0.25ex\right\vert}}

\geometry{left=3cm,right=3cm,top=3cm,bottom=2cm}

\begin{document}           

\title{Constraint energy minimizing generalized multiscale finite element method for convection diffusion equation}

\author{Lina Zhao\footnotemark[1]\qquad
    \;Eric Chung\footnotemark[2]}
\renewcommand{\thefootnote}{\fnsymbol{footnote}}
\footnotetext[1]{Department of Mathematics, City University of Hong Kong, Hong Kong SAR, China. ({linazha@cityu.edu.hk}).}
\footnotetext[2]{Department of Mathematics, The Chinese University of Hong Kong, Hong Kong SAR, China. ({tschung@math.cuhk.edu.hk}).}

\maketitle

\begin{abstract}
In this paper we present and analyze a constraint energy minimizing generalized multiscale finite element method for convection diffusion equation. To define the multiscale basis functions, we first build an auxiliary multiscale space by solving local spectral problems motivated by analysis. Then constraint energy minimization performed in oversampling domains is exploited to construct the multiscale space. The resulting multiscale basis functions have a good decay property even for high contrast diffusion and convection coefficients. Furthermore, if the number of oversampling layer is chosen properly, we can prove that the convergence rate is proportional to the coarse mesh size. Our analysis also indicates that the size of the oversampling domain weakly depends on the contrast of the heterogeneous coefficients. Several numerical experiments are presented illustrating the performances of our method.

\end{abstract}

\textbf{Keywords:}
Multiscale method, convection diffusion equation, local multiscale basis function, local spectral problem

\pagestyle{myheadings} \thispagestyle{plain}
\markboth{ZhaoChung} {CEM-GMsFEM for convection diffusion}

\section{Introduction}

In this paper we consider the following convection diffusion equation: Find $u\in H^1_0(\Omega)$ such that
\begin{equation}
\begin{split}
-\nabla \cdot (\kappa \nabla u) + \bm{\beta}\cdot \nabla u& = f\quad \mbox{in}\;\Omega,\\
u&=0 \quad \mbox{on}\;\partial \Omega,
\end{split}
\label{eq:model}
\end{equation}
where $\Omega\subset \mathbb{R}^2$ is the computational domain and $\bm{\beta}\in L^{\infty}(\Omega)^2$. We assume that both $\kappa$ and $\bm{\beta}$ are heterogeneous coefficients with multiple scales and very high contrast, in addition, the velocity field $\bm{\beta}$ is incompressible, i.e., $\nabla\cdot\bm{\beta}=0$. Further, we assume that there exist $\kappa_0,\kappa_1$ such that $\kappa_0\leq \kappa\leq \kappa_1$, where $\kappa_1/\kappa_0$ could be large. Moreover, we let $\beta_1$ and $\beta_0$ represent the supremum and minimum of $|\bm{\beta}|$ over $\Omega$, respectively. For simplicity, we assume that $\beta_0\geq 1$ and $\kappa_0 \geq 1$.

There are a large number of works devoted to solving the convection diffusion equation \eqref{eq:model}. This problem becomes even harder to solve when the P\'{e}clet number is large. To overcome this issue, numerous multiscale methods have been developed such as variational multiscale method \cite{HughesSangalli07,john2006two,song2010variational,xie2021variational}, multiscale finite element method \cite{ParkHou04}, multiscale discontinuous Galerkin method \cite{KimWheeler2014,chung2013sub}, heterogeneous multiscale method \cite{henning2010heterogeneous}, 
variational multiscale stabilization \cite{Li17} and multiscale stabilization \cite{CaloChung16,ChungEfendievLeung20}. The aforementioned methods are based on special construction of basis functions, which typically resolve fine scale information on relatively coarse meshes.

In this paper, our purpose is to study constraint energy minimizing generalized multiscale finite element method (CEM-GMsFEM) for convection diffusion equation. CEM-GMsFEM is based on GMsFEM \cite{EfendievGalvis13,ChungEfendiev14,ChungLi14,EfendievGalvisLi14,ChungEfendievHou16,ChungLee19,Chen20} and have been successfully applied to a wide range of partial differential equations \cite{ChungEfendievmixed18,LiChung19,Vasilyeva19,Cheung20,Cheungdual20,Fu20,FuChung20}.
The key steps of CEM-GMsFEM used in this paper can be summarized as follows. First, we need to build an auxiliary space. Specially, we define a suitable spectral problem over each coarse cell, and the first few eigenfunctions corresponding to small eigenvalues that contain important features about the multiscale coefficients $\kappa$ and $\bm{\beta}$ are used in the definition of the local auxiliary multiscale space. Second, we solve an appropriate energy subject to some constraints over the oversampling domain by using the local auxiliary multiscale space. We emphasize that the choice of spectral problem is very important and can ensure the good performances of the method. We prove that the multiscale basis functions are localizable. In addition, we prove the convergence rate $H/\Lambda$ if the number of oversampling layer is chosen suitably, where $\Lambda$ is the minimal eigenvalue that the corresponding eigenvector is not included in the space. Our analysis also shows that the size of the oversampling domain depends on the contrast of the heterogeneous coefficients weakly (logarithmically). We present several numerical experiments to verify the performances of CEM-GMsFEM. In particular, we exploit one example where velocity is obtained by solving Darcy law with SPE benchmark heterogeneous field.

The rest of the paper is organized as follows. In the next section, we provide some preliminaries. Then in section~\ref{sec:basis}, we present in detail the construction of our multiscale basis functions. Specially, we introduce the spectral problem that is used to define the auxiliary multiscale basis and the energy minimization that will be used to construct the multiscale space. The decay property of the multiscale basis function and the error estimates are presented in section~\ref{sec:analysis}. Several numerical experiments are carried out in section~\ref{sec:numerical} to test the performances of our method. Finally, a conclusion is given.

\section{Preliminaries}

In this section we introduce some notations that will be used throughout the paper. Let $\mathcal{T}_H$ be a conforming partition of $\Omega$ into rectangular elements. Here $H$ is the coarse meshsize and this partition is called coarse grid. We let $N_c$ be the number of vertices and $N$ be the number of coarse grids. We assume that each coarse element is partitioned into a connected union of fine-grid blocks and this partition is denoted as $\mathcal{T}_h$. Note that $\mathcal{T}_h$ is a refinement of the coarse grid $\mathcal{T}_H$ with the meshsize $h$. It is assumed that the fine grid is fine enough to resolve the solution. Here we use triangular grid as the fine grid and extension to other shapes of grids such as quadrilateral is straightforward. For each coarse element, we define an oversampling domain $K_{i,m}\subset \Omega$ by enlarging $K_i$ by $m$ coarse grid layers, where $m\geq 1$ is an integer, see Figure~\ref{fig:grid} for an illustration of the coarse grid, fine grid and oversampling domain.

\begin{figure}[t]
\centering
\includegraphics[width=0.35\textwidth]{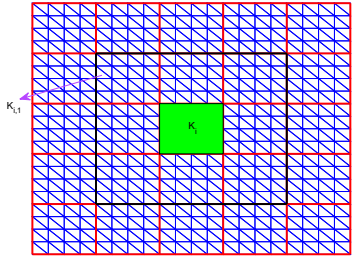}
\caption{An illustration of the fine grid, coarse grid and oversampling domain.}
\label{fig:grid}
\end{figure}

The weak solution of \eqref{eq:model} reads as follows: Find $u\in H^1_0(\Omega)$ such that
\begin{align}
A(u,v)=(f,v)\quad \forall v\in H^1_0(\Omega),\label{eq:weak}
\end{align}
where $A(u,v)=(\kappa \nabla u, \nabla v)+(\bm{\beta}\cdot\nabla u,v)$.

\section{Construction of multiscale basis function}\label{sec:basis}

%

In this section we present the construction of the multiscale basis functions. To this end, we first define the auxiliary space which is obtained by solving local spectral problems. Then we solve an appropriate energy subject to some constraints over the oversampling domain, which delivers the multiscale space.

Let $K_i$ be the $i$-th coarse block and let $V(K_i)$ be the restriction of $H^1_0(\Omega)$ on $K_i$. We define a local spectral problem which is defined as follows: Find $\lambda_j^{(i)}\in \mathbb{R}$ and $\phi_j^{(i)}\in V(K_i)$ such that
\begin{equation}
a_i(\phi_j^{(i)},v) = \lambda_j^{(i)} s_i(\phi_j^{(i)},v)\quad \forall v\in V(K_i).\label{eq:spectral}
\end{equation}
We remark that the above problem is solved on the fine mesh in the actual computations. According to our analysis, we can choose
\begin{align}
a_i(u,v) = \int_{K_i} \kappa\nabla u\cdot\nabla v\;dx,\quad s_i(u,v)=\int_{K_i} H^{-2}\kappa|\bm{\beta}|^2 uv\;dx.\label{eq:local-form}
\end{align}
We let $\lambda_j^{(i)}$ be the eigenvalues of \eqref{eq:spectral} arranged in ascending order. We will use the first $l_i$ eigenfunctions to construct our local auxiliary multiscale space $V_{aux}^{(i)}$, where $V_{aux}^{(i)}=\text{span}\{\phi_j^{(i)}| j \leq l_i\}$. The global auxiliary multiscale space $V_{aux}$ is the sum of these local auxiliary multiscale space, namely $V_{aux}=\oplus_{i=1}^N V_{aux}^{(i)}$. We will use this space to construct the multiscale basis functions which are $\phi$-orthogonal to the auxiliary space defined above.


For the local auxiliary space $V_{aux}^{(i)}$, the bilinear form $s_i$ in \eqref{eq:local-form} defines an inner product with norm $\|v\|_{s(K_i)}=s_i(v,v)^{1/2}$. These local inner products and norms provide natural definitions of inner product and norm for the global auxiliary multiscale space $V_{aux}$, which are defined by
\begin{align*}
s(v,w)=\sum_{i=1}^Ns_i(v,w),\quad \|v\|_s=s(v,v)^{1/2}\quad \forall v,w\in V_{aux}.
\end{align*}
We note that $s(v,w)$ and $\|v\|_s$ are also an inner product and norm for the space $H^1_0(\Omega)$. Using the above inner product, we can define the notion of $\phi$-orthogonality in the space $H^1_0(\Omega)$ (cf. \cite{ChungEfendievleung18}). Given a function $\phi_j^{(i)}\in V_{aux}$, we say that a function $\psi\in H^1_0(\Omega)$ is $\phi_j^{(i)}$-orthogonal if
\begin{align*}
s(\psi,\phi_j^{(i)})=1,\quad s(\psi,\phi_{j'}^{(i')})=0\quad \mbox{if} \;j'\neq j,i'\neq i.
\end{align*}
Now, we define $\pi_i:L^2(K_i)\rightarrow V_{aux}^{(i)}$ to be the projection with respect to the inner product $s_i(v,w)$. More precisely, it is defined by
\begin{align*}
\pi_i(u)=\sum_{j=1}^{l_i}\frac{s_i(u,\phi_j^{(i)})}{s_i(\phi_j^{(i)},\phi_j^{(i)})}\phi_j^{(i)}\quad \forall u\in H^1_0(\Omega).
\end{align*}
In addition, we let $\pi$ be the projection with respect to the inner product $s(v,w)$. Hence, $\pi$ is defined by
\begin{align*}
\pi(u)=\sum_{i=1}^N\sum_{j=1}^{l_i}\frac{s_i(u,\phi_j^{(i)})}{s_i(\phi_j^{(i)},\phi_j^{(i)})}\phi_j^{(i)}\quad \forall u\in H^1_0(\Omega),
\end{align*}
which satisfies $\pi(u)=\sum_{i=1}^N\pi_i(u)$.

We now present the construction of our multiscale basis functions. For each coarse element $K_i$ and an oversampling domain $K_{i,m}\subset \Omega$, we define the multiscale basis function $\psi_{j,ms}^{(i)}\in V_0(K_{i,m})$ by
\begin{align}
\psi_{j,ms}^{(i)}=\mbox{argmin}\{a(\psi,\psi)\;|\;\psi\in V_0(K_{i,m}),\quad \psi\; \mbox{is}\; \phi_j^{(i)}-\mbox{orthogonal}\},\label{eq:local-minimization}
\end{align}
where $V(K_{i,m})$ is the restriction of $H^1_0(\Omega)$ in $K_{i,m}$ and $V_0(K_{i,m})$ is the subspace of $V(K_{i,m})$ with zero trace on $\partial K_{i,m}$. Our multiscale finite element space $V_{ms}$ is defined by
\begin{align*}
V_{ms}=\mbox{span}\{\psi_{j,ms}^{(i)}\;|\; 1\leq j\leq l_i,1\leq i\leq N\}.
\end{align*}
The existence of the solution of the minimization problem \eqref{eq:local-minimization} will be proved in Lemma~\ref{lemma:vaux}. Moreover, we illustrate the importance of the auxiliary space on the decay of the multiscale basis functions. Here we take $H=1/10$, $\kappa=1$ and $\bm{\beta}=(\cos(18\pi y)\sin(18\pi x), -\cos(18\pi x)\sin(18\pi y))^T$, where $\bm{\beta}$ is a highly oscillatory vectorized function and its profile is shown in Figure~\ref{plot:beta}. In Figure~\ref{plot:multiscale-basis}, we display the first four nonzero eigenvalues obtained from solving the local spectral problem \eqref{eq:spectral}, and a multiscale basis function with one eigenfunction and four eigenfunctions in the local auxiliary space. We can observe that if enough number of eigenfunctions are exploited in solving the energy minimization problem, then the multiscale basis functions have a fast decay outside of the coarse block.

With the above preparations, the multiscale solution $u_{ms}$ is defined as the solution of the following problem: Find $u_{ms}\in V_{ms}$ such that
\begin{align}
A(u_{ms},v)=(f,v)\quad \forall v\in V_{ms}.\label{eq:multiscale-solution}
\end{align}

\begin{remark}

The minimization problem \eqref{eq:local-minimization} is implicit, we can recast it into the explicit form by introducing the lagrange multiplier. The equivalent explicit form reads as follows: Find $\psi_{j,ms}^{(i)}\in V_0(K_{i,m}),\lambda\in V_{aux}^{(i)}(K_{i,m})$ such that
\begin{align*}
a(\psi_{j,ms}^{(i)}, p)+s(p,\lambda)&=0\quad \forall p\in V_0(K_{i,m}),\\
s(\psi_{j,ms}^{(i)}-\phi_j^{(i)},q)&=0\quad \forall q\in V_{aux}^{(i)}(K_{i,m}),
\end{align*}
where $V_{aux}^{(i)}(K_{i,m})$ is the union of all local auxiliary spaces for $K_j\subset K_{i,m}$. One can numerically solve the above continuous problem on fine scale mesh.

\begin{figure}[t]
\centering
\includegraphics[width=0.35\textwidth]{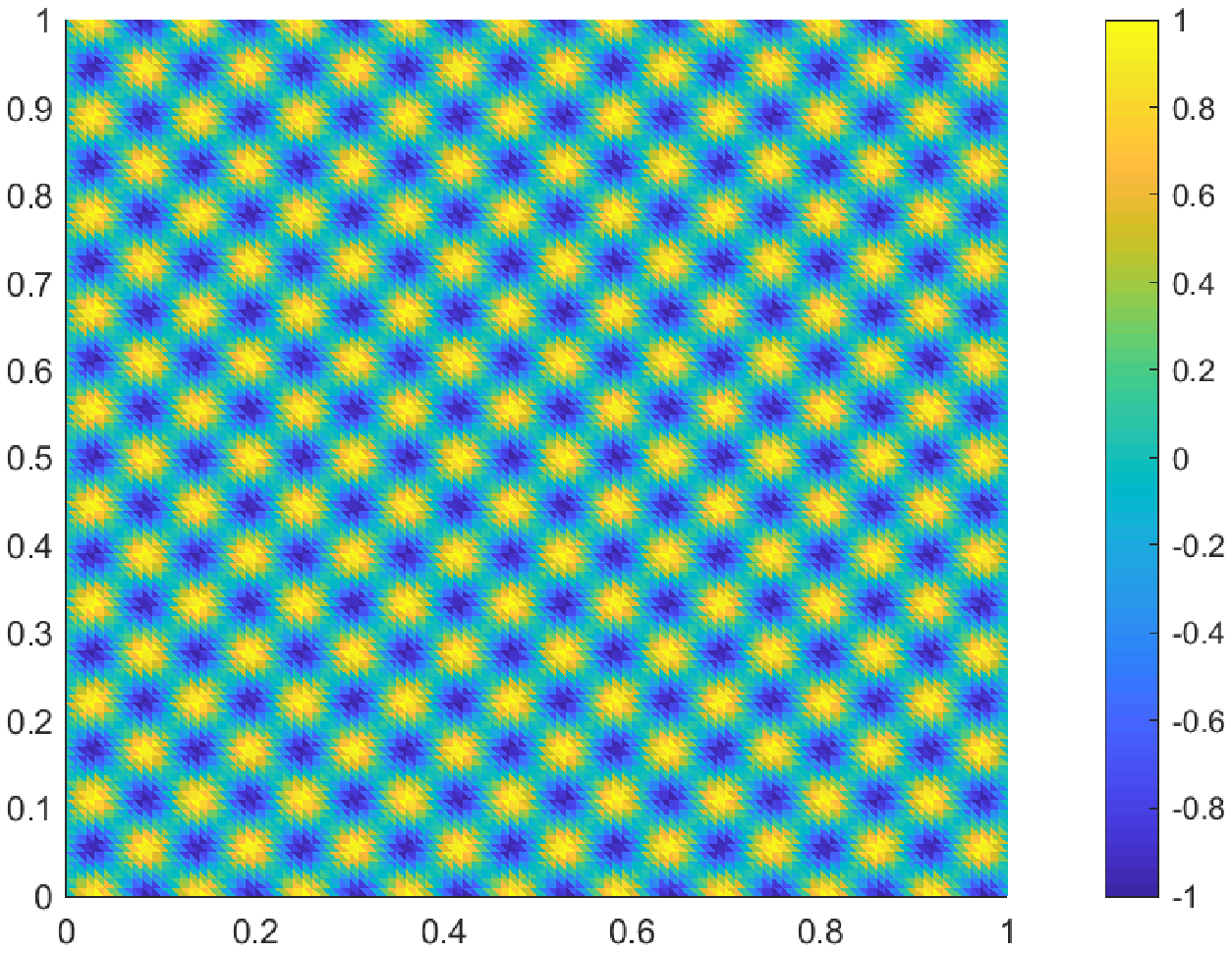}
\includegraphics[width=0.35\textwidth]{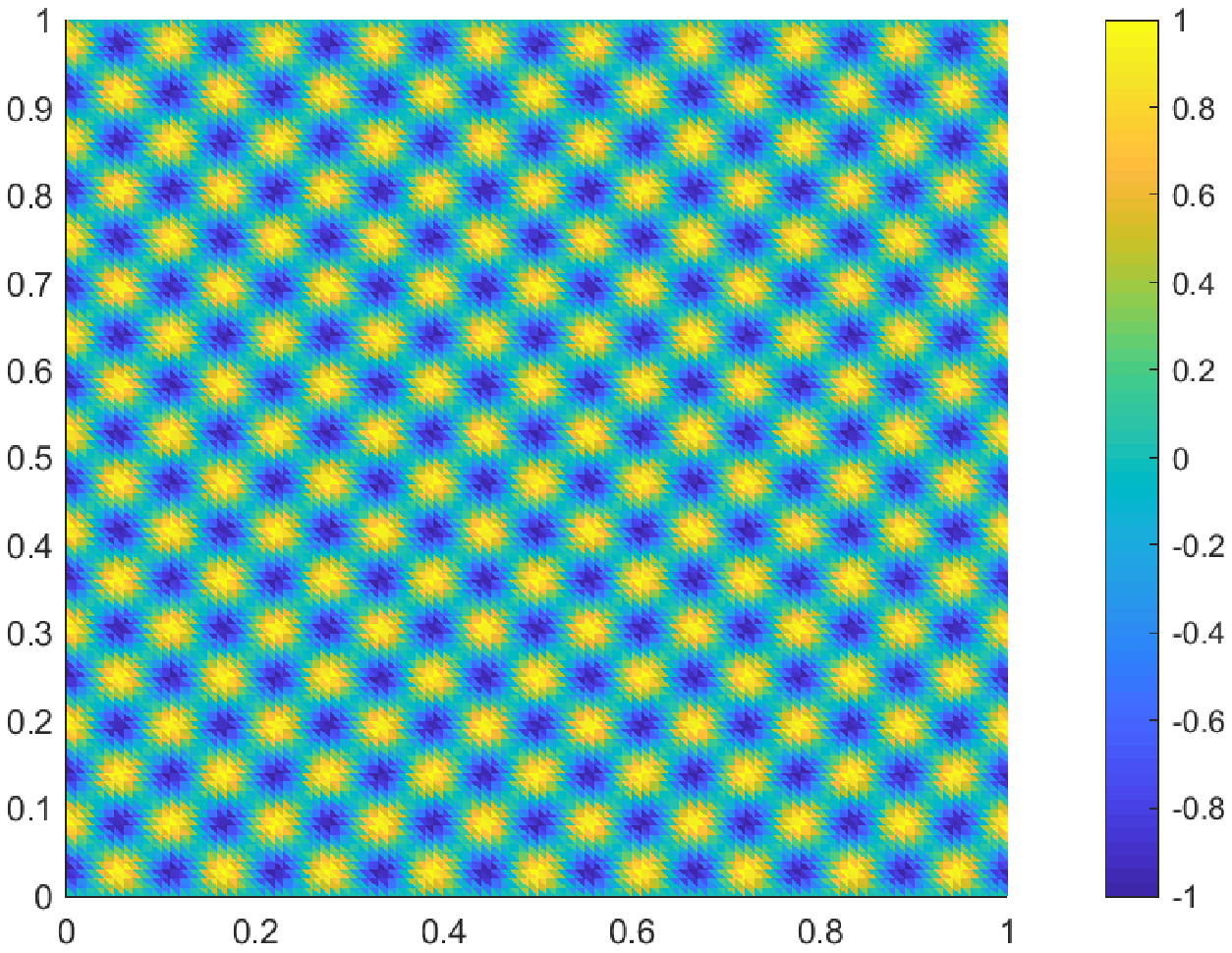}
\caption{Profile of the first component of $\bm{\beta}$ (left) and the second component of $\bm{\beta}$ (right).}
\label{plot:beta}
\end{figure}

\begin{figure}[t]
\centering
\includegraphics[width=0.32\textwidth]{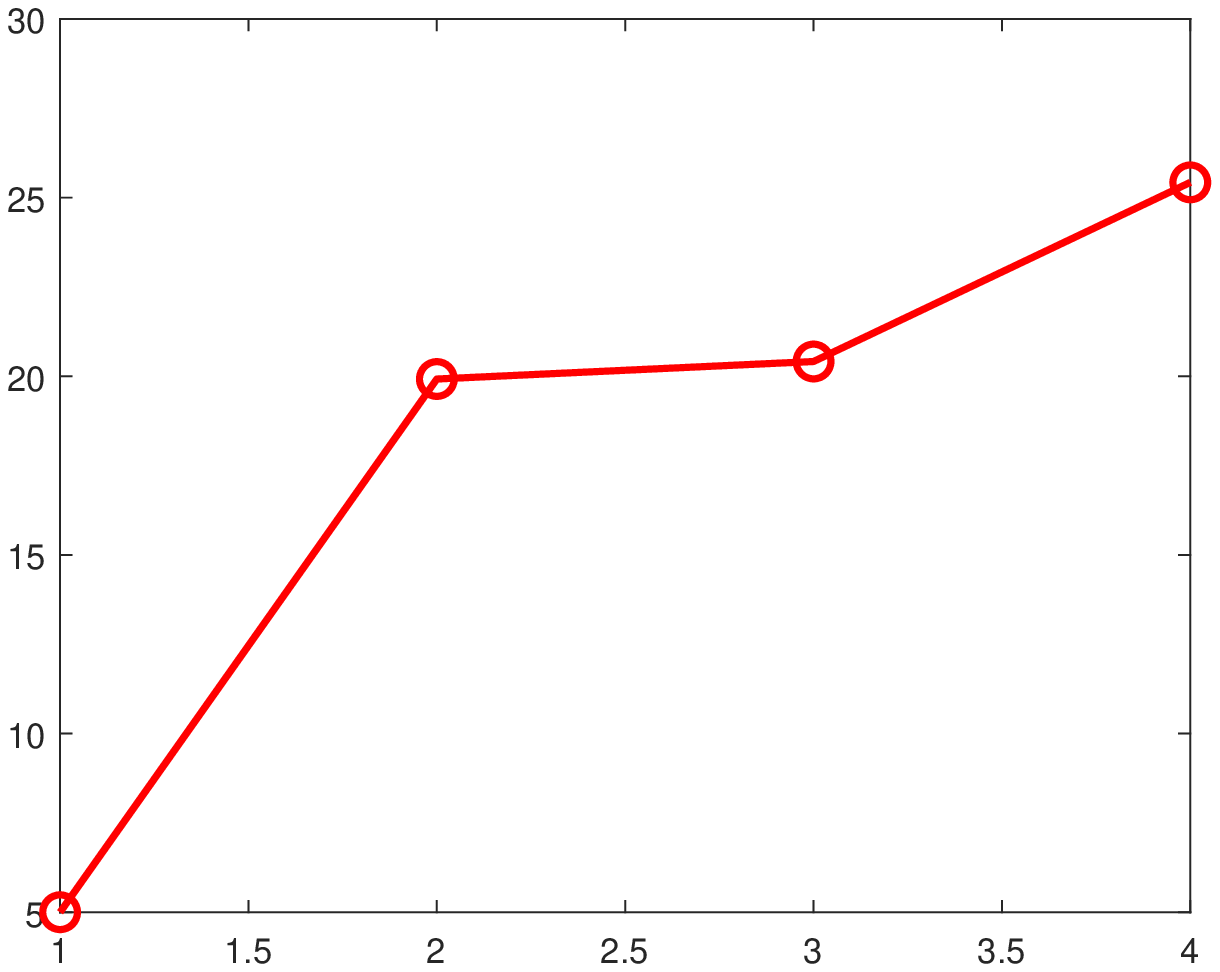}
\includegraphics[width=0.32\textwidth]{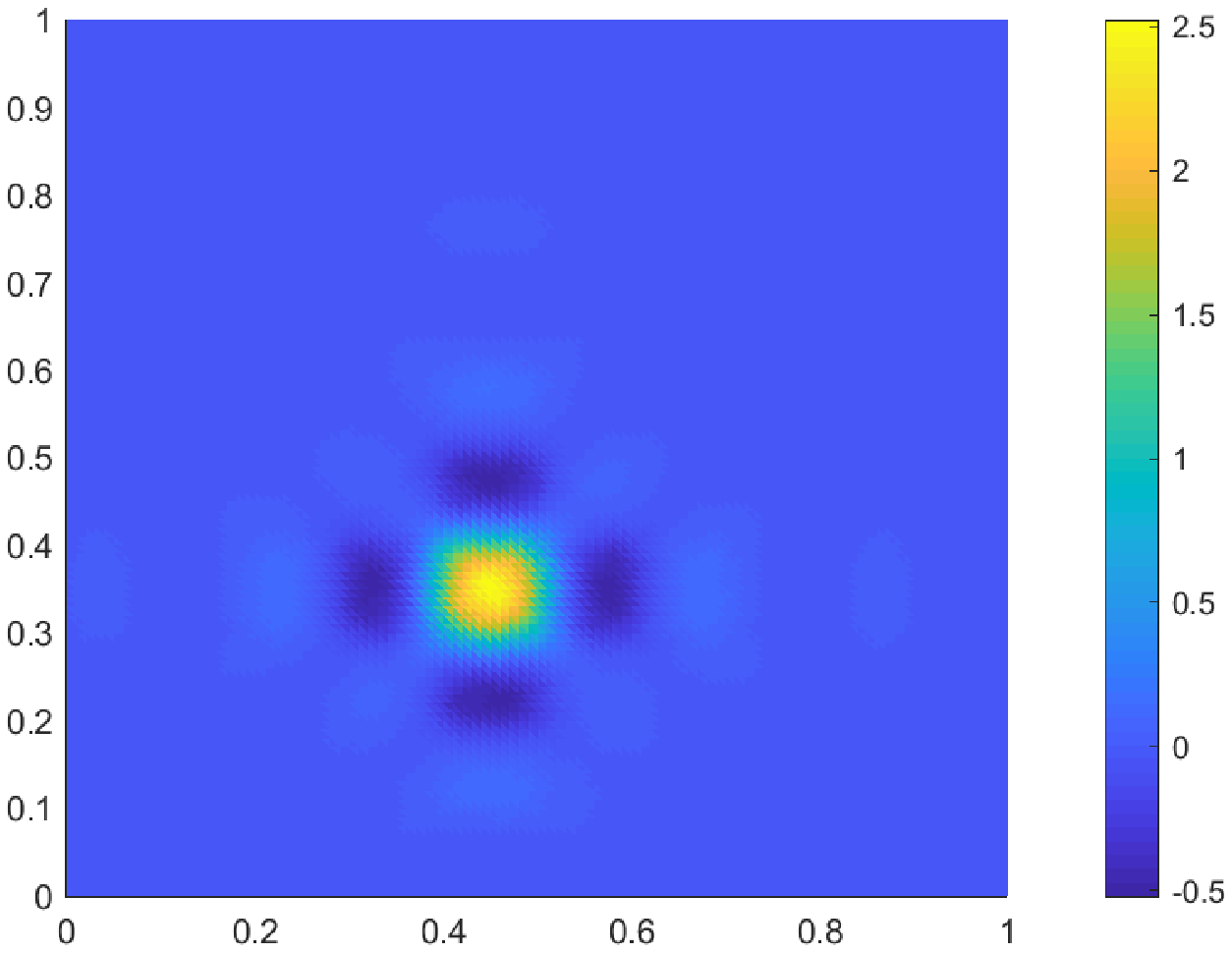}
\includegraphics[width=0.32\textwidth]{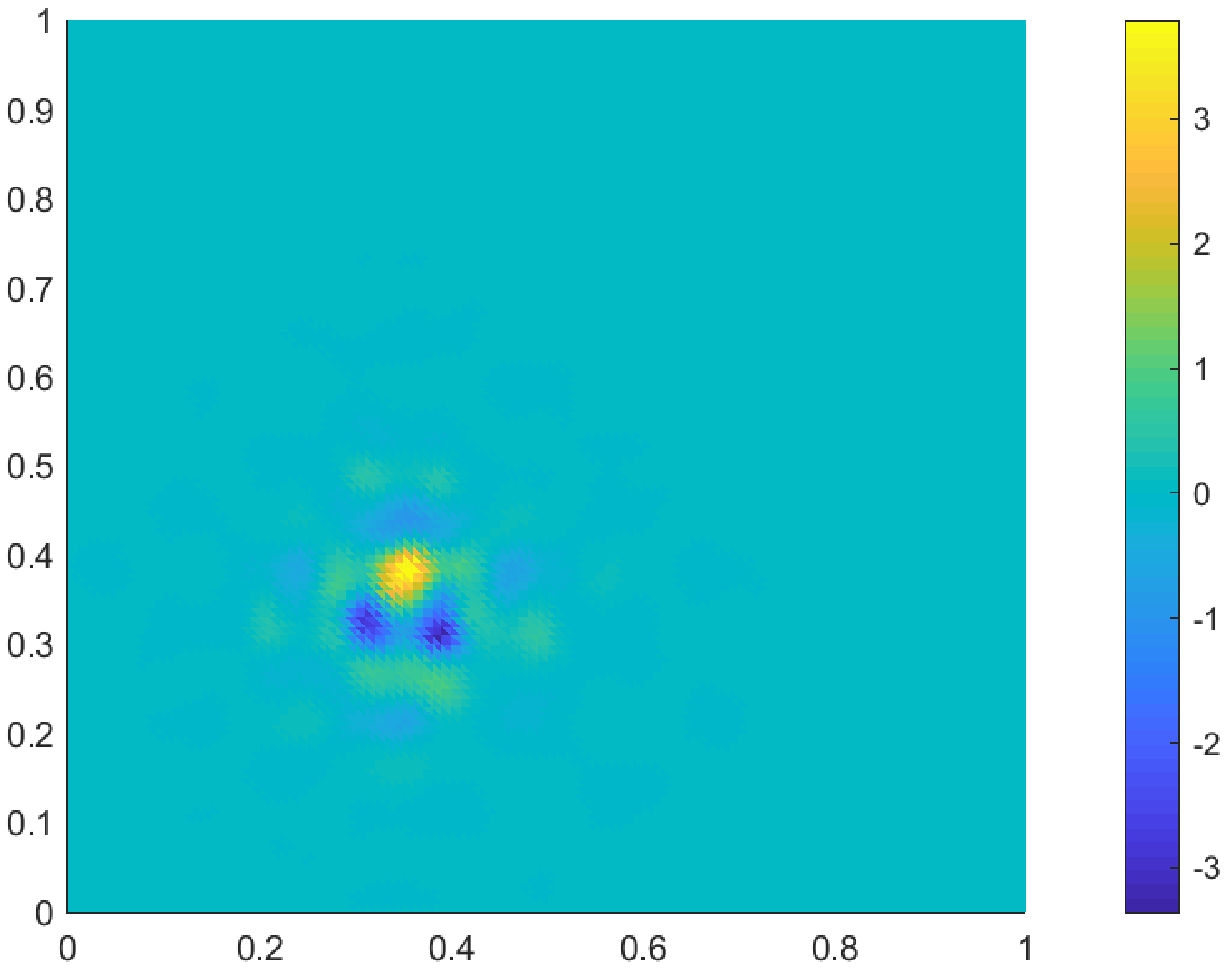}
\caption{A plot of the first four nonzero eigenvalues (left), a multiscale basis function using one eigenfunction in each local auxiliary space (middle) and a multiscale basis function using 4 eigenfunctions in each local auxiliary space (right).}
\label{plot:multiscale-basis}
\end{figure}

The local multiscale basis construction is motivated by the global basis construction defined below. Since the global multiscale basis functions will be exploited for later analysis, we present its construction here. The global multiscale basis function $\psi_j^{(i)}\in H^1_0(\Omega)$ is defined by
\begin{align}
\psi_{j}^{(i)}=\textnormal{argmin}\{a(\psi,\psi)\;|\;\psi\in H^1_0(\Omega),\quad \psi\; \mbox{is}\; \phi_j^{(i)}-\textnormal{orthogonal}\},\label{eq:global-minimization}
\end{align}
Thereby, the global multiscale finite element space $V_{glo}$ is defined by
\begin{align*}
V_{glo}=\textnormal{span}\{\psi_j^{(i)}\;|\; 1\leq j\leq l_i,1\leq i\leq N\}.
\end{align*}

\end{remark}

To facilitate later analysis, we define the projection $\tilde{u}\in V_{glo}$ of $u$ by
\begin{equation}
a(\tilde{u},v) = a(u,v) \quad\forall v\in V_{glo},\label{eq:projection}
\end{equation}
where $a(u,v)=\int_{\Omega}\kappa \nabla u \cdot \nabla v\;dx$. The above problem is well posed thanks to the Lax-Milgram lemma.
Notice that we have $V = V_{glo} \oplus \tilde{V}$, where $\tilde{V}$ is the kernel of the projection $\pi: L^2 \rightarrow V_{aux}$ with respect to the inner product $s(\cdot, \cdot)$.
Interested readers can refer to \cite{ChungEfendievleung18} for more discussions regarding this.
We can infer from \eqref{eq:projection} that $u-u_{glo}\in V_{glo}^\perp=\tilde{V}$, thereby $\pi(u-\tilde{u})=0$.
%
%
%
%
%

\section{Analysis}\label{sec:analysis}

In this section we first prove the convergence for the projection defined in \eqref{eq:projection}, which will be served as an intermediate tool for the convergence analysis of the multiscale solution. Then we show the decay property of the multiscale basis functions. Finally, we prove the convergence of the multiscale solution. Before proving the convergence of the proposed method, we introduce some notations that will be used later. We define $a$-norm by $\|u\|_a^2=\int_{\Omega}\kappa |\nabla u|^2\;dx$. For a given subdomain $\Omega_i\subset \Omega$, we define the local $a$-norm and $s$-norm by $\|u\|_{a(\Omega_i)}^2=\int_{\Omega_i}\kappa |\nabla u|^2\;dx$ and $\|u\|_{s(\Omega_i)}^2=H^{-2}\int_{\Omega_i}\kappa |\bm{\beta}|^2 u^2\;dx$.

\begin{lemma}\label{lemma:projection}
Let $u$ be the solution of \eqref{eq:weak} and $\tilde{u}$ satisfy \eqref{eq:projection}. We have $u-\tilde{u}\in \tilde{V}$ and
\begin{equation*}
\| u -\tilde{u}\|_a \leq H\Lambda^{-\frac{1}{2}}\Big(\|\kappa^{-\frac{1}{2}}  |\bm{\beta}|^{-1} f\|_{L^2(\Omega)}
+\|\kappa^{-\frac{1}{2}}\nabla u\|_{L^2(\Omega)}\Big),
\end{equation*}
where $\Lambda=\min_{1\leq i\leq N}\lambda_{l_i+1}^{(i)}$.

\end{lemma}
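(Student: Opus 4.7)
The plan is to exploit $a$-orthogonality together with the spectral bound furnished by the eigenproblem \eqref{eq:spectral}, turning the error identity into a single inequality that can be absorbed by dividing by $\|u-\tilde u\|_a$.

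First I would note that $\tilde u\in V_{glo}$ and $a$ is symmetric, so the definition \eqref{eq:projection} gives $a(u-\tilde u,v)=0$ for every $v\in V_{glo}$. In particular $a(\tilde u,u-\tilde u)=0$, hence
\begin{equation*}
\|u-\tilde u\|_a^2 \;=\; a(u-\tilde u,u-\tilde u) \;=\; a(u,u-\tilde u).
\end{equation*}
Next I would substitute the weak form \eqref{eq:weak} of $u$ (with test function $u-\tilde u\in H^1_0(\Omega)$) to get
\begin{equation*}
\|u-\tilde u\|_a^2 \;=\; (f,u-\tilde u) \;-\; (\bm{\beta}\cdot\nabla u,\,u-\tilde u).
\end{equation*}

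The heart of the argument is a weighted $L^2$ bound for $u-\tilde u$. Since $u-\tilde u\in\tilde V$, I have $\pi_i(u-\tilde u)=0$ on every coarse block $K_i$. The eigenfunctions $\{\phi_j^{(i)}\}$ form an $s_i$-orthogonal basis of $V(K_i)$ that is also $a_i$-orthogonal with $a_i(\phi_j^{(i)},\phi_j^{(i)})=\lambda_j^{(i)}s_i(\phi_j^{(i)},\phi_j^{(i)})$, so the Rayleigh quotient characterisation yields
\begin{equation*}
s_i(w,w)\;\leq\;\frac{1}{\lambda_{l_i+1}^{(i)}}\,a_i(w,w)\qquad\text{for every } w\in V(K_i)\text{ with }\pi_i(w)=0.
\end{equation*}
Applying this with $w=u-\tilde u$ on each $K_i$, summing, and recalling that $\|\cdot\|_s^2=H^{-2}\int_\Omega \kappa|\bm\beta|^2(\cdot)^2$, I obtain
\begin{equation*}
\int_\Omega \kappa|\bm\beta|^2 (u-\tilde u)^2\,dx \;\leq\; \frac{H^2}{\Lambda}\,\|u-\tilde u\|_a^2.
\end{equation*}

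Now I would split off the weights to make Cauchy--Schwarz compatible with the left-hand side above. For the source term,
\begin{equation*}
|(f,u-\tilde u)| \;\leq\; \|\kappa^{-\tfrac12}|\bm\beta|^{-1}f\|_{L^2(\Omega)}\,\|\kappa^{\tfrac12}|\bm\beta|(u-\tilde u)\|_{L^2(\Omega)} \;\leq\; \frac{H}{\sqrt{\Lambda}}\,\|\kappa^{-\tfrac12}|\bm\beta|^{-1}f\|_{L^2(\Omega)}\,\|u-\tilde u\|_a.
\end{equation*}
For the convection term, pulling a $\kappa^{1/2}$ out of $\nabla u$ and matching it with $\kappa^{1/2}|\bm\beta|(u-\tilde u)$ gives
\begin{equation*}
|(\bm\beta\cdot\nabla u,u-\tilde u)| \;\leq\; \|\kappa^{-\tfrac12}\nabla u\|_{L^2(\Omega)}\,\|\kappa^{\tfrac12}|\bm\beta|(u-\tilde u)\|_{L^2(\Omega)} \;\leq\; \frac{H}{\sqrt{\Lambda}}\,\|\kappa^{-\tfrac12}\nabla u\|_{L^2(\Omega)}\,\|u-\tilde u\|_a.
\end{equation*}
Combining the two bounds with the identity above and cancelling one factor of $\|u-\tilde u\|_a$ yields the claim.

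The only real obstacle is the spectral step: one must recognise that the specific choice $s_i(u,v)=\int_{K_i}H^{-2}\kappa|\bm\beta|^2 uv$ is precisely what allows Cauchy--Schwarz to pair $f$ with the weight $\kappa^{-1/2}|\bm\beta|^{-1}$ and $\bm\beta\cdot\nabla u$ with $\kappa^{-1/2}\nabla u$. Incompressibility of $\bm\beta$ is not needed in this estimate, since the convection term is controlled pointwise rather than through integration by parts.
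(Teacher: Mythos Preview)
Your proof is correct and follows essentially the same route as the paper: expand $\|u-\tilde u\|_a^2=a(u,u-\tilde u)=(f,u-\tilde u)-(\bm\beta\cdot\nabla u,u-\tilde u)$, apply weighted Cauchy--Schwarz to produce $\|\kappa^{1/2}|\bm\beta|(u-\tilde u)\|_{L^2(\Omega)}=H\|u-\tilde u\|_s$, and then invoke $\pi(u-\tilde u)=0$ together with the spectral problem \eqref{eq:spectral} to obtain $\|u-\tilde u\|_s\le\Lambda^{-1/2}\|u-\tilde u\|_a$. The only difference is the order of exposition (you establish the spectral bound first, the paper applies Cauchy--Schwarz first), and your closing remark that incompressibility of $\bm\beta$ is not used here is a valid observation the paper leaves implicit.
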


\begin{proof}

We have from \eqref{eq:projection} and the Cauchy-Schwarz inequality that
\begin{equation*}
\begin{split}
\|u-\tilde{u}\|_a^2 &= a(u-\tilde{u}, u-\tilde{u}) = a(u, u-\tilde{u}) = (f,u-\tilde{u}) - (\bm{\beta}\cdot \nabla u, u-\tilde{u}) \\
&\leq \|\kappa^{-\frac{1}{2}} |\bm{\beta}|^{-1} f\|_{L^2(\Omega)}\|\kappa^{\frac{1}{2}}  |\bm{\beta}| (u-\tilde{u})\|_{L^2(\Omega)}
+\|\kappa^{-\frac{1}{2}}\nabla u\|_{L^2(\Omega)}\|\kappa^{\frac{1}{2}}|\bm{\beta}|(u-\tilde{u})\|_{L^2(\Omega)}\\
& \leq (H\|\kappa^{-\frac{1}{2}}  |\bm{\beta}|^{-1} f\|_{L^2(\Omega)}+H\|\kappa^{-\frac{1}{2}}\nabla u\|_{L^2(\Omega)}) \| u-\tilde{u}\|_s.
\end{split}
\end{equation*}
Since $\pi(u-\tilde{u})=0$, we have from the spectral problem \eqref{eq:spectral} that
\begin{align*}
\|u-\tilde{u}\|_s^2=\sum_{i=1}^N \|u-\tilde{u}\|_{s(K_i)}^2=\sum_{i=1}^N \|(I-\pi_i)(u-\tilde{u})\|_{s(K_i)}^2\leq \frac{1}{\Lambda}\|u-\tilde{u}\|_a^2.
\end{align*}
Therefore, the following estimate holds
\begin{equation*}
\| u -\tilde{u}\|_a \leq H\Lambda^{-\frac{1}{2}}\Big(\|\kappa^{-\frac{1}{2}} |\bm{\beta}|^{-1} f\|_{L^2(\Omega)}
+\|\kappa^{-\frac{1}{2}}\nabla u\|_{L^2(\Omega)}\Big).
\end{equation*}

\end{proof}


The above lemma shows the convergence of the projection (cf. \eqref{eq:projection}) defined by using global multiscale basis functions. Next, we are going to show that the global multiscale basis functions are localizable. For this purpose, we introduce some concepts that will be used later. For each coarse block $K_i$, we define $B$ to be a bubble function and $B\mid_{\tau}=\frac{\varphi_1\varphi_2\varphi_3}{27},\forall \tau\in \mathcal{T}_h(K_i)$, where $\varphi_i$ is barycentric coordinate and $\mathcal{T}_h(K_i)$ denotes the collection of fine grids restricted to $K_i$, and more information regarding the bubble function $B$ can be found in \cite{Verfurth96}.
We define the constant by
\begin{align*}
C_{\pi} = \sup_{K_i\in \mathcal{T}_H,\mu \in V_{aux}}\frac{\int_{K_i} |\bm{\beta}|^2\mu^2\;dx}{\int_{K_i}|\bm{\beta}|^2 B \mu^2\;dx}.
\end{align*}
The following lemma considers the following minimization problem defined on a coarse block $K_i$
\begin{align}
v=\text{argmin}\{a(\psi,\psi)\;|\; \psi\in V_0(K_i),\quad s_i(\psi, v_{aux})=1,\quad s_i(\psi, w)=0 \quad \forall w\in V_{aux}^{\perp}\}\label{eq:minimization}.
\end{align}
for a given $v_{aux}\in V_{aux}^{(i)}$ with $\|v_{aux}\|_{s(K_i)}=1$, where $v_{aux}^\perp\subset V_{aux}^{(i)}$ is the orthogonal complement of $\text{span}\{v_{aux}\}$ with respect to the inner product $s_i$.

The next lemma shows the existence of the solution to the minimization problem \eqref{eq:minimization}, which follows similar line to that of \cite{ChungEfendievleung18}. We also provide the proof here for the readers' convenience.
\begin{lemma}\label{lemma:vaux}
For all $v_{aux}\in V_{aux}$ there exists a function $v\in H^1_0(\Omega)$ such that
\begin{align*}
\pi(v) = v_{aux},\quad  \|v\|_a^2\leq C \|v_{aux}\|_s^2,\quad supp(v)\subset supp(v_{aux}).
\end{align*}

\end{lemma}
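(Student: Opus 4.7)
The plan is to localize the construction: decompose $v_{aux}=\sum_{i=1}^{N}v_{aux}^{(i)}$ with $v_{aux}^{(i)}\in V_{aux}^{(i)}$, build $v^{(i)}\in V_0(K_i)$ for each coarse block separately, extend by zero, and then set $v=\sum_{i}v^{(i)}$. Because each $v^{(i)}$ vanishes on $\partial K_i$, the sum lies in $H^1_0(\Omega)$ and its support is contained in the union of the $K_i$'s on which $v_{aux}^{(i)}\neq 0$, which is contained in $\operatorname{supp}(v_{aux})$. Moreover, the supports of the $v^{(i)}$ are essentially disjoint (overlapping only on $\partial K_i$), so $\|v\|_a^2=\sum_i\|v^{(i)}\|_{a(K_i)}^2$, and since $\pi_j(v^{(i)})=0$ for $j\neq i$ we get $\pi(v)=\sum_i\pi_i(v^{(i)})$.

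For the local construction on $K_i$, I would solve the constrained quadratic problem
\begin{equation*}
v^{(i)}=\operatorname*{argmin}\bigl\{\,a_i(\psi,\psi)\ \big|\ \psi\in V_0(K_i),\ s_i(\psi,\mu)=s_i(v_{aux}^{(i)},\mu)\ \forall\mu\in V_{aux}^{(i)}\,\bigr\},
\end{equation*}
which is the unconstrained variant of \eqref{eq:minimization} and, via Lagrange multipliers, becomes a saddle-point system on $V_0(K_i)\times V_{aux}^{(i)}$. Well-posedness then hinges on (i) coercivity of $a_i$ on $V_0(K_i)$, which is immediate from $\kappa\geq\kappa_0>0$ together with Poincar\'e, and (ii) an inf-sup condition
\begin{equation*}
\inf_{\mu\in V_{aux}^{(i)}}\sup_{\psi\in V_0(K_i)}\frac{s_i(\psi,\mu)}{\|\psi\|_{a(K_i)}\|\mu\|_{s(K_i)}}\geq c>0.
\end{equation*}
To verify the inf-sup I would test with $\psi=B\mu$, which belongs to $V_0(K_i)$ because the bubble $B$ vanishes on every fine edge and hence on $\partial K_i$. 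The lower bound $s_i(B\mu,\mu)\gtrsim C_\pi^{-1}\|\mu\|_{s(K_i)}^2$ follows directly from the definition of $C_\pi$, so the constant $C_\pi$ is precisely what makes this step work.

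The main obstacle will be the upper bound $\|B\mu\|_{a(K_i)}\leq C\|\mu\|_{s(K_i)}$ needed to close the inf-sup estimate and to control the energy $\|v^{(i)}\|_{a(K_i)}$. One has to split $\nabla(B\mu)=\mu\nabla B+B\nabla\mu$: the $B\nabla\mu$ term is controlled by $\|\mu\|_{a(K_i)}$, which by the spectral problem \eqref{eq:spectral} is bounded by $\lambda_{l_i}^{(i)}\|\mu\|_{s(K_i)}^2$; the $\mu\nabla B$ term is estimated by combining the scaling of $|\nabla B|$ with the factor $H^{-2}|\bm{\beta}|^2$ hidden in $\|\mu\|_{s(K_i)}^2$, using $|\bm{\beta}|\geq\beta_0\geq 1$ and the definition of $C_\pi$. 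This is where all the weights ($\kappa$, $|\bm{\beta}|^2$, $H^{-2}$) have to be balanced carefully so that the resulting constant $C$ is independent of the fine mesh and of the contrast.

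Once the saddle point is solved, the minimization property gives $\|v^{(i)}\|_{a(K_i)}\leq\|B\nu^{(i)}\|_{a(K_i)}$ for the feasible candidate $B\nu^{(i)}$ with $\nu^{(i)}\in V_{aux}^{(i)}$ chosen (via a finite-dimensional linear system whose invertibility is exactly the $C_\pi<\infty$ condition) so that $s_i(B\nu^{(i)},\cdot)=s_i(v_{aux}^{(i)},\cdot)$ on $V_{aux}^{(i)}$. Combining with the bound from the previous paragraph yields $\|v^{(i)}\|_{a(K_i)}^2\leq C\|v_{aux}^{(i)}\|_{s(K_i)}^2$. Summing over $i$ proves the three claims: $\pi(v)=v_{aux}$ by construction of the constraints, $\|v\|_a^2\leq C\|v_{aux}\|_s^2$ by additivity over the disjoint supports, and $\operatorname{supp}(v)\subset\operatorname{supp}(v_{aux})$ from the zero extension.
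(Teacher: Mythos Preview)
Your proposal is correct and follows essentially the same route as the paper: reduce to a single coarse block $K_i$, recast the constrained minimization as a saddle-point problem on $V_0(K_i)\times V_{aux}^{(i)}$, verify the inf--sup condition by testing with $\psi=B\mu$ (using the constant $C_\pi$ for the lower bound), and bound $\|B\mu\|_{a(K_i)}$ by splitting $\nabla(B\mu)=\mu\nabla B+B\nabla\mu$ together with the spectral estimate $\|\mu\|_{a(K_i)}^2\leq\lambda_{l_i}^{(i)}\|\mu\|_{s(K_i)}^2$. The paper is slightly terser---it shows the inf--sup ingredients and then invokes well-posedness of the saddle system directly rather than building an explicit feasible candidate $B\nu^{(i)}$---but the substance is the same.
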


\begin{proof}
Let $v_{aux}\in V_{aux}^{(i)}$. The minimization problem \eqref{eq:minimization} is equivalent to the following variational problem: Find $v\in V_0(K_i)$ and $\mu \in V_{aux}^{(i)}$ such that
\begin{align}
a_i(v,w)+s_i(w,\mu) &=0 \quad \forall w\in V_0(K_i),\label{eq:saddle1}\\
s_i(v, \phi) & = s_i(v_{aux}, \phi)\quad \forall \phi\in V_{aux}^{(i)}.\label{eq:saddle2}
\end{align}
Note that, the well-posedness of the minimization problem \eqref{eq:minimization} is equivalent to the existence of a function $v\in V_0(K_i)$ such that
\begin{align*}
s_i(v,v_{aux})\geq C\|v_{aux}\|_s^2,\quad \|v\|_{a(K_i)}\leq C \|v_{aux}\|_{s(K_i)},
\end{align*}
where $C$ is independent of the meshsize but possibly depends on the problem parameters.

Note that $v_{aux}$ is supported in $K_i$. We let $v=Bv_{aux}$, it then follows from the definition of $s_i$ that
\begin{align*}
s_i(v,v_{aux})=H^{-2}\int_{K_i}\kappa |\bm{\beta}|^2 B v_{aux}^2\geq C_{\pi}^{-1} \|v_{aux}\|_{s(K_i)}^2.
\end{align*}
Since $\nabla (Bv_{aux})=v_{aux}\nabla B+B \nabla v_{aux}$, $|B|\leq 1$ and $|\nabla B|^2\leq C H^{-2}$, we have
\begin{align*}
\|v\|_{a(K_i)}^2=\|Bv_{aux}\|_{a(K_i)}^2\leq C \|v\|_{a(K_i)}\Big(\|v_{aux}\|_{a(K_i)}+\|v_{aux}\|_{s(K_i)}\Big).
\end{align*}
Finally, using the spectral problem \eqref{eq:spectral}, we can obtain
\begin{align*}
\|v_{aux}\|_{a(K_i)}\leq (\max_{1\leq j\leq l_i }\lambda_j^{(i)})\|v_{aux}\|_{s(K_i)}.
\end{align*}
This proves the unique solvability of the minimization problem \eqref{eq:minimization}. So $v$ and $v_{aux}$ satisfy \eqref{eq:saddle1}-\eqref{eq:saddle2}. Then we can obtain $\pi_i(v)=v_{aux}$ from \eqref{eq:saddle2}. Therefore, the preceding arguments complete the proof.

\end{proof}

Next, we will show that the multiscale basis functions have a decay property. To this end,  we define the cutoff function with respect to the oversampling domains.
For each $K_i$, we recall that $K_{i,m}$ is the oversampling coarse region by
enlarging $K_i$ by $m$ coarse grid layers. For $M>m$, we
define $\chi_{i}^{M,m}\in \mbox{span}\{\chi_i^{ms}\}$ such that $0\leq \chi_i^{M,m}\leq 1$
and
\begin{align}
\chi_i^{M,m}&=1 \quad \mbox{in}\;K_{i,m},\label{cut1}\\
\chi_i^{M,m}&=0\quad \mbox{in}\;\Omega\backslash K_{i,M}\label{cut2}.
\end{align}
Note that we have $K_{i,m}\subset K_{i,M}$ and $\{\chi_{i}^{ms}\}_{i=1}^N$ are the standard  multiscale finite element (MsFEM) basis functions (cf. \cite{HouWu97}).

\begin{lemma}\label{lemma:decay}
We consider the oversampling domain $K_{i,k}$ with $k\geq 2$. That is, $K_{i,k}$ is an oversampling region by enlarging $K_i$ by $k$ coarse grid layers. Let $\phi_j^{(i)}$ be a given auxiliary multiscale basis function. Let $\psi_{j,ms}^{(i)}$ be the multiscale basis function achieved from \eqref{eq:local-minimization} and let $\psi_j^{(i)}$ be the global multiscale basis function obtained from \eqref{eq:global-minimization}. Then we have
\begin{align*}
\|\psi_j^{(i)}-\psi_{j,ms}^{(i)}\|_a^2\leq E \|\phi_j^{(i)}\|_{s(K_i)}^2,
\end{align*}
where $E=8D^2(1+\Lambda^{-1})(1+\frac{\Lambda^{1/2}}{2D^{1/2}})^{1-k}$.

\end{lemma}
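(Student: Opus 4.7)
The plan is to exploit the Lagrange multiplier reformulation of both minimization problems \eqref{eq:local-minimization} and \eqref{eq:global-minimization}, and to combine a cutoff-function argument with an iterated one-layer decay estimate. First I would record the saddle-point characterizations: there exist Lagrange multipliers $\mu \in V_{aux}$ and $\mu_{ms} \in V_{aux}^{(i)}(K_{i,k})$ such that $a(\psi_j^{(i)}, p) + s(p, \mu) = 0$ for all $p \in H^1_0(\Omega)$ and $a(\psi_{j,ms}^{(i)}, p) + s(p, \mu_{ms}) = 0$ for all $p \in V_0(K_{i,k})$. This gives the Galerkin orthogonality
\begin{equation*}
a(\psi_j^{(i)}-\psi_{j,ms}^{(i)}, p) = 0 \quad \text{for all } p \in V_0(K_{i,k}) \text{ with } \pi(p) = 0.
\end{equation*}
The standard trick is then to construct a competitor $\widetilde{\psi}\in V_0(K_{i,k})$ satisfying the $\phi_j^{(i)}$-orthogonality constraint such that $\|\psi_j^{(i)} - \widetilde{\psi}\|_a$ is controlled by the energy of $\psi_j^{(i)}$ outside a slightly smaller oversampling region. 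Since $\psi_{j,ms}^{(i)}$ minimizes $a(\cdot,\cdot)$ over all such competitors, $\|\psi_j^{(i)}-\psi_{j,ms}^{(i)}\|_a\le \|\psi_j^{(i)}-\widetilde{\psi}\|_a$.

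To build $\widetilde{\psi}$, I would take the cutoff $\chi_i^{k,k-1}$ from \eqref{cut1}--\eqref{cut2} and set $\widetilde{\psi} = \chi_i^{k,k-1}\psi_j^{(i)} + \delta$, where the correction $\delta$ is supplied by Lemma~\ref{lemma:vaux} so that $\pi(\widetilde{\psi}) = \phi_j^{(i)}$ and $\delta$ is supported only where $\chi_i^{k,k-1}\psi_j^{(i)}$ fails the orthogonality constraint (i.e.\ in the annulus $K_{i,k}\setminus K_{i,k-1}$). Expanding $\nabla(\chi_i^{k,k-1}\psi_j^{(i)})$ and using $|\chi_i^{k,k-1}|\le 1$, $|\nabla \chi_i^{k,k-1}|\le C H^{-1}$, the leakage term is bounded by a constant multiple of $\|\psi_j^{(i)}\|_{a(\Omega\setminus K_{i,k-1})}^2 + \|\psi_j^{(i)}\|_{s(\Omega\setminus K_{i,k-1})}^2$, and Lemma~\ref{lemma:vaux} bounds the corrector $\delta$ in the $a$-norm by the corresponding $s$-norm of its source.

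The key step is the one-layer decay inequality
\begin{equation*}
\|\psi_j^{(i)}\|_{a(\Omega\setminus K_{i,\ell})}^2 \le \left(1+\tfrac{\Lambda^{1/2}}{2D^{1/2}}\right)^{-1}\|\psi_j^{(i)}\|_{a(\Omega\setminus K_{i,\ell-1})}^2,
\end{equation*}
which I would prove by testing the global saddle-point equation against $(1-\chi_i^{\ell,\ell-1})\psi_j^{(i)}$ plus another Lemma~\ref{lemma:vaux} correction, and using the spectral inequality $\|(I-\pi_i)v\|_{s(K_i)}^2 \le \Lambda^{-1}\|v\|_{a(K_i)}^2$ to trade $s$-norm control for $a$-norm control. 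Iterating from $\ell = k-1$ down to $\ell = 1$ gives the geometric factor $(1+\Lambda^{1/2}/(2D^{1/2}))^{1-k}$, and the base case $\|\psi_j^{(i)}\|_a^2 \le C(1+\Lambda^{-1})\|\phi_j^{(i)}\|_{s(K_i)}^2$ follows by testing \eqref{eq:global-minimization} against the function supplied by Lemma~\ref{lemma:vaux}.

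The main obstacle will be the one-layer decay estimate, because here we must combine the symmetric part $a(\cdot,\cdot)$ with the $s$-norm weighted by $H^{-2}\kappa|\bm{\beta}|^2$ while tracking all constants carefully: the correction from Lemma~\ref{lemma:vaux} mixes $a$-norm and $s$-norm on the annulus, and only the spectral gap $\Lambda$ (together with the constant $D$ hidden in Lemma~\ref{lemma:vaux}) can convert the $s$-norm part back into an $a$-norm part. Absorbing the resulting cross terms by Young's inequality with a sharp constant is what yields the precise factor $(1+\Lambda^{1/2}/(2D^{1/2}))^{-1}$ rather than a weaker bound, and it is this bookkeeping that makes the argument delicate.
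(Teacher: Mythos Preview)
Your proposal is correct and follows essentially the same route as the paper: the Lagrange-multiplier orthogonality, a C\'ea-type reduction to $\|(1-\chi_i^{k,k-1})\psi\|_a$ plus an $s$-norm corrector supplied by Lemma~\ref{lemma:vaux}, and the iterated one-layer decay driven by the spectral gap $\Lambda$. The only differences are cosmetic---the paper introduces the auxiliary function $\eta=\psi_j^{(i)}-\tilde{\phi}_j^{(i)}\in\tilde V$ (which coincides with $\psi_j^{(i)}$ outside $K_i$, so the decay estimates are identical) and runs the recursion for $\eta$---and one small technical slip in your outline: in the one-layer step the test function must be $(1-\chi_i^{\ell,\ell-1})^2\psi_j^{(i)}$ (squared cutoff) rather than $(1-\chi_i^{\ell,\ell-1})\psi_j^{(i)}$, so that the identity $\kappa\xi^2|\nabla\psi|^2=\kappa\nabla\psi\cdot\nabla(\xi^2\psi)-2\kappa\xi\psi\,\nabla\xi\cdot\nabla\psi$ yields a controllable cross term on the annulus.
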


\begin{proof}
For the given $\phi_j^{(i)}\in V_{aux}$, it follows from Lemma~\ref{lemma:vaux} that there exists a $\tilde{\phi}_j^{(i)}$ such that
\begin{align}
\pi(\tilde{\phi}_j^{(i)}) = \phi_j^{(i)},\quad \|\tilde{\phi}_j^{(i)}\|_a^2\leq C \|\phi_j^{(i)}\|_s^2\quad \mbox{and}\quad \text{supp}(\tilde{\phi}_j^{(i)})\subset K_i.\label{eq:phit}
\end{align}
We let $\eta = \psi_j^{(i)}-\tilde{\phi}_j^{(i)}$. Note that $\eta\in \tilde{V}$ since $\pi(\eta)=0$. By using the resulting variational forms of the minimization problems, we can obtain
\begin{align}
a(\psi_j^{(i)}, v)+s(v, \mu_j^{(i)})=0 \quad \forall v\in V\label{eq:eq1}
\end{align}
and
\begin{align}
a(\psi_{j,ms}^{(i)}, v)+s(v, \mu_{j,ms}^{(i)})=0 \quad \forall v\in V_0(K_{i,k})\label{eq:eq2}
\end{align}
for some $\mu_j^{(i)}, \mu_{j,ms}^{(i)}\in V_{aux}$. Subtracting the above two equations and restricting $v\in \tilde{V}_0(K_{i,k})$ leads to
\begin{align*}
a(\psi_{j}^{(i)}-\psi_{j,ms}^{(i)},v)=0\quad \forall v\in \tilde{V}_0(K_{i,k}).
\end{align*}
Here, we have $\tilde{V}_0(K_{i,k})=\{v\in H^1_0(K_{i,k})\;|\;\pi(v)=0\}$. Therefore,
for $v\in \tilde{V}_0(K_{i,k})$, we can infer that
\begin{align*}
\|\psi_j^{(i)}-\psi_{j,ms}^{(i)}\|_a^2&
=a(\psi_j^{(i)}-\psi_{j,ms}^{(i)},\psi_j^{(i)}-\psi_{j,ms}^{(i)})\\
&=a(\psi_j^{(i)}-\psi_{j,ms}^{(i)},
\psi_j^{(i)}-\tilde{\phi}_j^{(i)}-\psi_{j,ms}^{(i)}+\tilde{\phi}_j^{(i)})
=a(\psi_j^{(i)}-\psi_{j,ms}^{(i)},\eta-v),
\end{align*}
where $-\psi_{j,ms}^{(i)}+\tilde{\phi}_j^{(i)}\in \tilde{V}_0(K_{i,k})$. Thus, we obtain
\begin{align}
\|\psi_j^{(i)}-\psi_{j,ms}^{(i)}\|_a\leq \|\eta-v\|_a.\label{eq:decay}
\end{align}
To estimate $\|\psi_j^{(i)}-\psi_{j,ms}^{(i)}\|_a$, we need to derive the upper bound for $\|\eta-v\|_a$. We consider the $i$-th coarse
block $K_i$. For this block, we consider two oversampling regions $K_{i,k-1}$ and $K_{i,k}$.
Using these two overampling regions, we define the cutoff function $\chi_i^{k,k-1}$
with the properties in \eqref{cut1}-\eqref{cut2}, where we take $m=k-1$ and $M=k$.
For any coarse block $K_j\subset K_{i,k-1}$,
 we have $\chi_{i}^{k,k-1}\equiv 1$ on $K_j$ by using \eqref{cut1}. Since $\eta\in \tilde{V}$, it holds that
\begin{align*}
s_j(\chi_{i}^{k,k-1}\eta,\phi_n^{(j)})=s_j(\eta, \phi_n^{(j)})=0\quad \forall n =1,2,\cdots,l_j.
\end{align*}
From the above result and the fact that $\chi_i^{k,k-1}\equiv 0$ in $\Omega\backslash K_{i,k}$, we have
\begin{align*}
\mbox{supp}(\pi(\chi_i^{k,k-1}\eta))\subset K_{i,k}\backslash K_{i,k-1}.
\end{align*}

By Lemma~\ref{lemma:vaux}, for the function $\pi(\chi_i^{k,k-1}\eta)$, there is $\mu\in H^1_0(\Omega)$ such that $\mbox{supp}(\mu)\subset K_{i,k}\backslash K_{i,k-1}$ and $\pi(\mu-\chi_i^{k,k-1}\eta)=0$. Moreover, it also follows from Lemma~\ref{lemma:vaux} that
\begin{align}
\|\mu\|_{a(K_{i,k}\backslash K_{i,k-1})}\leq D^{1/2}\|\pi(\chi_i^{k,k-1}\eta)\|_{s(K_{i,k}\backslash K_{i,k-1})}\leq D^{1/2}\|\chi_i^{k,k-1}\eta\|_{s(K_{i,k}\backslash K_{i,k-1})}.\label{eq:muineq}
\end{align}
Hence, taking $v=-\mu+\chi_i^{k,k-1}\eta$ in \eqref{eq:decay}, we can obtain
\begin{align}
\|\psi_j^{(i)}-\psi_{j,ms}^{(i)}\|_a\leq \|\eta-v\|_a\leq \|(1-\chi_i^{k,k-1})\eta\|_a+\|\mu\|_{a(K_{i,k}\backslash K_{i,k-1})}.\label{eq:decay2}
\end{align}
Next, we will estimate the two terms on the right hand side of \eqref{eq:decay2}.

Step 1: We first estimate the first term in \eqref{eq:decay2}. By a direct computation, we have
\begin{align*}
\|(1-\chi_i^{k,k-1})\eta\|_a^2\leq 2\Big(\int_{\Omega\backslash K_{i,k-1}}\kappa(1-\chi_{i}^{k,k-1})^2|\nabla \eta|^2+\int_{\Omega\backslash K_{i,k-1}}\kappa |\nabla \chi_i^{k,k-1}|^2\eta^2\Big).
\end{align*}
Note that, we have $1-\chi_i^{k,k-1}\leq 1$. For the second term on the righ hand side of the above inequality, we will use the fact that $\eta\in \tilde{V}$ and the spectral problem \eqref{eq:spectral}
\begin{align*}
\int_{\Omega\backslash K_{i,k-1}}\kappa |\nabla \chi_i^{k,k-1}|^2\eta^2&\leq C\int_{\Omega\backslash K_{i,k-1}}H^{-2}\kappa |\nabla \eta|^2\leq C \beta_0^{-2} \int_{\Omega\backslash K_{i,k-1}}H^{-2}\kappa |\bm{\beta}|^2 |\nabla \eta|^2=C\beta_0^{-2}\|\eta\|_{s(\Omega\backslash K_{i,k-1})}^2\\
&\leq C\Lambda^{-1}\beta_0^{-2}\|\eta\|_{a(\Omega\backslash K_{i,k-1})}^2.
\end{align*}
Therefore, we can obtain
\begin{align*}
\|(1-\chi_i^{k,k-1})\eta\|_a^2\leq C
(1+\Lambda^{-1})\int_{\Omega\backslash K_{i,k-1}}\kappa |\nabla \eta|^2.
\end{align*}
We will estimate the right hand side in Step 3.

Step 2: In this step we will estimate the second term on the right hand side of \eqref{eq:decay2}. By \eqref{eq:muineq}, the fact that $|\chi_i^{k,k-1}|\leq 1$ and the spectral problem \eqref{eq:spectral}, we have
\begin{align*}
\|\mu\|_{a(K_{i,k}\backslash K_{i,k-1})}^2\leq D \|\chi_i^{k,k-1}\eta\|_{s(K_{i,k}\backslash K_{i,k-1})}^2\leq \frac{D}{\Lambda} \int_{K_{i,k}\backslash K_{i,k-1}}\kappa |\nabla \eta|^2.
\end{align*}

Combining Steps 1 and 2, we obtain
\begin{align}
\|\psi_j^{(i)}-\psi_{j,ms}^{(i)}\|_a^2\leq 2D(1+\frac{1}{\Lambda}) \|\eta\|_{a(\Omega\backslash K_{i,k-1})}^2.\label{eq:step12}
\end{align}

Step 3: Finally, we will estimate the term $\|\eta\|_{a(\Omega\backslash K_{i,k-1})}$. We will first show that the following recursive inequality holds
\begin{align}
\|\eta\|_{a(\Omega\backslash K_{i,k-1})}^2\leq (1+\frac{\Lambda^{1/2}}{2D^{1/2}})^{-1}\|\eta\|_{a(\Omega\backslash K_{i,k-2})}^2,\label{eq:recursive}
\end{align}
where $k-2\geq 0$. Using \eqref{eq:recursive} in \eqref{eq:step12}, we can get
\begin{align}
\|\psi_j^{(i)}-\psi_{j,ms}^{(i)}\|_a^2\leq 2D(1+\frac{1}{\Lambda})(1+\frac{\Lambda^{1/2}}{2D^{1/2}})^{-1}\|\eta\|_{a(\Omega\backslash K_{i,k-2})}^2.\label{eq:step12-recursive}
\end{align}
By using \eqref{eq:recursive} again in \eqref{eq:step12-recursive}, we can obtain
\begin{align*}
\|\psi_j^{(i)}-\psi_{j,ms}^{(i)}\|_a^2&\leq 2D(1+\frac{1}{\Lambda})(1+\frac{\Lambda^{1/2}}{2D^{1/2}})^{1-k}\|\eta\|_{a(\Omega\backslash K_{i})}^2\\
&\leq 2D(1+\frac{1}{\Lambda})(1+\frac{\Lambda^{1/2}}{2D^{1/2}})^{1-k}\|\eta\|_{a}^2.
\end{align*}
By employing the definition of $\eta$, the energy minimizing property of $\psi_j^{(i)}$ and Lemma~\ref{lemma:vaux}, we have
\begin{align*}
\|\eta\|_a=\|\psi_{j}^{(i)}-\tilde{\phi}_j^{(i)}\|_a\leq 2\|\tilde{\phi}_j^{(i)}\|_a\leq 2D^{1/2}\|\phi_j^{(i)}\|_{s(K_i)}.
\end{align*}

Step 4: We will prove the estimate \eqref{eq:recursive}. Let $\xi=1-\chi_i^{k-1,k-2}$. Then we see that $\xi\equiv 1$ in $\Omega\backslash K_{i,k-1}$ and $0\leq \xi\leq 1$ otherwise. Then we have
\begin{align}
\|\eta\|_{a(\Omega\backslash K_{i,k-1})}^2\leq \int_\Omega \kappa \xi^2|\nabla \eta|^2=\int_\Omega \kappa \nabla \eta\cdot \nabla (\xi^2 \eta)-2\int_\Omega \kappa \xi \eta \nabla \xi \nabla \eta.\label{eq:eta}
\end{align}
We estimate the first term in \eqref{eq:eta}. For the function $\pi(\xi^2\eta)$, using Lemma~\ref{lemma:vaux}, there exists $\gamma\in H^1_0(\Omega)$ such that $\pi(\gamma)=\pi(\xi^2\eta)$ and $\mbox{supp}(\gamma)\subset \mbox{supp}(\pi(\xi^2\eta))$. For any coarse elements $K_m\subset \Omega\backslash K_{i,k-1}$, since $\xi\equiv 1$ on $K_m$, we have
\begin{align*}
s_m(\xi^2\eta, \phi_n^{(m)})=0\quad \forall n=1,\ldots, l_m.
\end{align*}
On the other hand, since $\xi\equiv 0$ in $K_{i,k-2}$, there holds
\begin{align*}
s_m(\xi^2\eta, \phi_n^{(m)})=0\quad \forall n=1,\ldots, l_m,\; \forall K_m\subset K_{i,k-2}.
\end{align*}
From the above two conditions, we see that $\mbox{supp}(\pi(\xi^2\eta))\subset K_{i,k-1}\backslash K_{i,k-2}$ and consequently $\mbox{supp}(\gamma)\subset K_{i,k-1} \backslash K_{i,k-2}$. Note that, since $\pi(\gamma)=\pi(\xi^2\eta)$, we have $\xi^2\eta-\gamma \in \tilde{V}$. We also note that $\mbox{supp}(\xi^2\eta-\gamma)\subset \Omega\backslash K_{i,k-2}$. By \eqref{eq:phit}, the functions $\tilde{\phi}_j^{(i)}$ and $\xi^2\eta-\gamma$ have disjoint supports, so $a(\tilde{\phi}_j^{(i)}, \xi^2\eta-\gamma)=0$. Then, by the definition of $\eta$, we have
\begin{align*}
a(\eta, \xi^2\eta-\gamma)=a(\psi_j^{(i)},\xi^2\eta-\gamma).
\end{align*}
By the construction of $\psi_j^{(i)}$, we have $a(\psi_j^{(i)},\xi^2\eta-\gamma)=0$. Then we can estimate the first term in \eqref{eq:eta} by the Cauchy-Schwarz inequality and Lemma~\ref{lemma:vaux}
\begin{align*}
\int_\Omega \kappa \nabla \eta\cdot \nabla(\xi^2\eta)&=\int_\Omega\kappa \nabla \eta\cdot \nabla \gamma\\
&\leq D^{1/2} \|\eta\|_{a(K_{i,k-1}\backslash K_{i,k-2})}\|\pi(\xi^2\eta)\|_{s(K_{i,k-1}\backslash K_{i,k-2})}.
\end{align*}
For all coarse elements $K\subset K_{i,k-1}\backslash K_{i,k-2}$, since $\pi(\eta)=0$, we have
\begin{align*}
\|\pi(\xi^2\eta)\|_{s(K)}^2\leq\|\xi^2\eta\|_{s(K)}^2\leq \frac{1}{\Lambda}\int_K \kappa |\nabla \eta|^2.
\end{align*}
Summing the above over all coarse elements $K\subset K_{i,k-1}\backslash K_{i,k-2}$, we can obtain
\begin{align*}
\|\pi(\xi^2\eta)\|_{s(K_{i,k-1}\backslash K_{i,k-2})}\leq (\frac{1}{\Lambda})^{1/2}  \|\eta\|_{a(K_{i,k-1}\backslash K_{i,k-2})}.
\end{align*}
To estimate the second term in \eqref{eq:eta}, we have from the spectral problem \eqref{eq:spectral}
\begin{align*}
2\int_\Omega \kappa \xi\eta\nabla \xi\cdot\nabla \eta\leq 2 \beta_0^{-1} \| \eta\|_{s(K_{i,k-1}\backslash K_{i,k-2})}\|\eta\|_{a(K_{i,k-1}\backslash K_{i,k-2})}\leq \frac{2}{\beta_0\Lambda^{\frac{1}{2}}}\|\eta\|_{a(K_{i,k-1}\backslash K_{i,k-2})}^2.
\end{align*}
Hence, the preceding arguments yield the upper bound for \eqref{eq:eta}
\begin{align*}
\|\eta\|_{a(\Omega\backslash K_{i,k-1})}^2\leq \frac{2D^{1/2}}{\Lambda^{1/2}}\|\eta\|_{a(K_{i,k-1}\backslash K_{i,k-2})}^2.
\end{align*}
Thus
\begin{align*}
\|\eta\|_{a(\Omega\backslash K_{i,k-2})}^2=\|\eta\|_{a(\Omega\backslash K_{i,k-1})}^2+\|\eta\|_{a(K_{i,k-1}\backslash K_{i,k-2})}^2\geq (1+\frac{\Lambda^{1/2}}{2D^{1/2}})\|\eta\|_{a(\Omega\backslash K_{i,k-1})}^2.
\end{align*}

\end{proof}

Following \cite{ChungEfendievleung18}, we can prove the following lemma. The proof is omitted here for simplicity.
\begin{lemma}\label{lemma:psi}
With the same assumptions as in Lemma~\ref{lemma:decay}, we can obtain
\begin{align*}
\|\sum_{i=1}^N(\psi_j^{(i)}-\psi_{j,ms}^{(i)})\|_a^2&\leq C(k+1)^2\sum_{i=1}^N \|\psi_j^{(i)}-\psi_{j,ms}^{(i)}\|_a^2.
\end{align*}

\end{lemma}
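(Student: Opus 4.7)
The plan is to combine a finite-overlap argument with the decay estimate proved in Lemma~\ref{lemma:decay}. Set $e_i = \psi_j^{(i)} - \psi_{j,ms}^{(i)}$. Although the global basis $\psi_j^{(i)}$ is not compactly supported, the subtraction $e_i$ decays exponentially away from $K_i$, and the localized factor $\psi_{j,ms}^{(i)}$ lives in $K_{i,k}$. I would therefore split $e_i$ into a piece supported in a slightly enlarged oversampling region and a small tail, and sum the pieces using the bounded-overlap property of the $\{K_{i,k+1}\}$.

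More concretely, first I would fix a cutoff $\chi_i^{k+1,k}$ as in \eqref{cut1}--\eqref{cut2}, so that $\chi_i^{k+1,k}\equiv 1$ on $K_{i,k}$ and vanishes outside $K_{i,k+1}$, and then write
\begin{equation*}
e_i = \chi_i^{k+1,k} e_i + (1-\chi_i^{k+1,k})e_i.
\end{equation*}
Since $\psi_{j,ms}^{(i)}$ is supported in $K_{i,k}$, we have $(1-\chi_i^{k+1,k})e_i = (1-\chi_i^{k+1,k})\psi_j^{(i)}$, supported in $\Omega\setminus K_{i,k}$. Its $a$-norm is controlled by $\|\psi_j^{(i)}\|_{a(\Omega\setminus K_{i,k})}$, which can be bounded using the same cutoff/recursion argument from Step 3--Step 4 of the proof of Lemma~\ref{lemma:decay}; this yields the exponential decay of the tail back to a multiple of $\|e_i\|_a^2$ (with a constant that can be absorbed).

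Next, for the localized pieces I would exploit the fact that at any $x\in\Omega$ at most $C(k+1)^2$ of the supports $K_{i,k+1}$ can contain $x$. Together with Cauchy--Schwarz this gives
\begin{equation*}
\Bigl\|\sum_{i=1}^N \chi_i^{k+1,k} e_i\Bigr\|_a^2 \;\leq\; C(k+1)^2 \sum_{i=1}^N \|\chi_i^{k+1,k} e_i\|_a^2,
\end{equation*}
and the product rule plus the bound $|\nabla \chi_i^{k+1,k}|\leq CH^{-1}$ give $\|\chi_i^{k+1,k} e_i\|_a^2 \leq C\|e_i\|_a^2 + CH^{-2}\|e_i\|_{L^2(K_{i,k+1})}^2$. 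The $L^2$ term is absorbed by the spectral bound used repeatedly in Lemma~\ref{lemma:decay}, namely $\int_{K_{i,k+1}} H^{-2}\kappa|\bm{\beta}|^2 \eta^2 \leq \Lambda^{-1}\|\eta\|_a^2$ applied to $e_i$ (which lies in $\tilde V$ outside $K_i$), leaving $\|\chi_i^{k+1,k}e_i\|_a^2 \leq C\|e_i\|_a^2$. Combining with the tail estimate delivers the stated inequality.

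The main obstacle, and the reason the proof is largely parallel to \cite{ChungEfendievleung18} rather than a one-line overlap count, is that $\psi_j^{(i)}$ is globally supported: the truncation error $(1-\chi_i^{k+1,k})\psi_j^{(i)}$ must be controlled carefully enough that (i) its contribution keeps the $(k+1)^2$ scaling rather than degrading to something like $N(k+1)^2$, and (ii) the $H^{-2}$ coming from $|\nabla \chi_i^{k+1,k}|^2$ is neutralized using the spectral property of $V_{aux}$ together with the fact that $\pi(e_i)$ is supported essentially in $K_i$.
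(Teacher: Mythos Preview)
The paper omits the proof and refers to \cite{ChungEfendievleung18}, so the relevant comparison is with that argument. Your splitting of each $e_i$ into a localized piece $\chi_i^{k+1,k}e_i$ and a tail $(1-\chi_i^{k+1,k})e_i$ is natural, and the localized part is handled correctly. The gap is in the tail sum, which you correctly identify as ``the main obstacle'' but do not actually resolve.

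Concretely: the tails $(1-\chi_i^{k+1,k})e_i$ are supported in $\Omega\setminus K_{i,k}$, and these sets have essentially \emph{full} overlap---a generic point of $\Omega$ lies in all but $O((k+1)^2)$ of them. A Cauchy--Schwarz bound on $\bigl\|\sum_i t_i\bigr\|_a^2$ therefore produces a factor of $N$, not $(k+1)^2$. Exponential decay does not rescue this: the recursion of Lemma~\ref{lemma:decay} gives $\|t_i\|_a^2\leq C\|e_i\|_a^2$ with a constant of order one (both sides carry the same decay factor in $k$), so $\sum_i\|t_i\|_a^2$ is genuinely comparable to $\sum_i\|e_i\|_a^2$. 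Nothing in your outline turns the $N$ into $(k+1)^2$.

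The argument in \cite{ChungEfendievleung18} avoids this by applying the cutoff to the \emph{test function} rather than to each summand. One writes $\|w\|_a^2=\sum_i a(e_i,w)$ with $w=\sum_l e_l$, and splits $w=\chi_i^{k+1,k}w+(1-\chi_i^{k+1,k})w$ inside each term. For the far piece one uses that $\psi_{j,ms}^{(i)}$ vanishes outside $K_{i,k}$ together with $a(\psi_j^{(i)},v)=0$ for $v\in\tilde V$ (after correcting $(1-\chi_i^{k+1,k})w$ into $\tilde V$ via Lemma~\ref{lemma:vaux}, with the corrector supported in $K_{i,k+1}\setminus K_{i,k}$). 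Each term is then bounded by $\|e_i\|_a\bigl(\|w\|_{a(K_{i,k+1})}+\|w\|_{s(K_{i,k+1})}\bigr)$, and the $(k+1)^2$ factor arises from finite overlap applied to the \emph{fixed} function $w$, namely $\sum_i\|w\|_{a(K_{i,k+1})}^2\leq C(k+1)^2\|w\|_a^2$ (and similarly for the $s$-norm, using $w\in\tilde V$). This is the missing idea: finite overlap must act on $w$, not on the badly-overlapping tails of the $e_i$.
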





Now we are ready to prove the following theorem, which gives an estimate of the error between the weak solution $u$ and the multiscale solution $u_{ms}$.
\begin{theorem}

Let $u$ be the solution of \eqref{eq:weak} and $u_{ms}$ be the solution of \eqref{eq:multiscale-solution}. Then, we have
\begin{align*}
\|u-u_{ms}\|_a&\leq C\Big( (1+H\kappa_0^{-1}\Lambda^{-\frac{1}{2}})H\Lambda^{-\frac{1}{2}}\Big(
\|\kappa^{-\frac{1}{2}} |\bm{\beta}|^{-1} f\|_{L^2(\Omega)}\\
&\;+\|\kappa^{-\frac{1}{2}}\nabla u\|_{L^2(\Omega)}\Big)+(1+H\kappa_0^{-1}\Lambda^{-\frac{1}{2}} )(1+k)E^{\frac{1}{2}}\|\tilde{u}\|_s\Big).
\end{align*}
where $\tilde{u}$ is the projection obtained from \eqref{eq:projection}. Moreover, if the number of oversampling layers $k=\mathcal{O}(log(\frac{\beta_1\kappa_1^{\frac{1}{2}}}{H\kappa_0^{\frac{1}{2}}}))$ and $H\kappa_0^{-1}\Lambda^{-\frac{1}{2}} \leq C $,
we have
%
\begin{align*}
\|u-u_{ms}\|_a\leq CH \Lambda^{-\frac{1}{2}}\Big(
\|\kappa^{-\frac{1}{2}} |\bm{\beta}|^{-1} f\|_{L^2(\Omega)}+\|\kappa^{-\frac{1}{2}}\nabla u\|_{L^2(\Omega)}\Big).
\end{align*}

\end{theorem}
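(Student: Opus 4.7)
The plan is a C\'{e}a-type argument with the global projection $\tilde{u}$ serving as an intermediate. I would first combine Galerkin orthogonality $A(u-u_{ms},v)=0$ for every $v\in V_{ms}$ with the antisymmetry $(\bm{\beta}\cdot\nabla w,w)=0$ (valid since $\nabla\cdot\bm{\beta}=0$ and $w|_{\partial\Omega}=0$) to obtain, for any $w\in V_{ms}$,
\[
\|u-u_{ms}\|_{a}^{2}=A(u-u_{ms},u-u_{ms})=A(u-u_{ms},u-w).
\]
Splitting the right-hand side into its diffusive and convective parts, I would bound the convective piece via the weighted Cauchy--Schwarz inequality
\[
\bigl|(\bm{\beta}\cdot\nabla v,g)\bigr|\le \|v\|_{a}\Bigl(\int_{\Omega}\kappa^{-1}|\bm{\beta}|^{2}g^{2}\,dx\Bigr)^{1/2}\le H\kappa_{0}^{-1}\|v\|_{a}\|g\|_{s},
\]
using $\kappa^{-1}\le \kappa_{0}^{-2}\kappa$ (since $\kappa_{0}\ge 1$) to introduce the $s$-norm. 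This produces the quasi-best approximation bound $\|u-u_{ms}\|_{a}\le \|u-w\|_{a}+H\kappa_{0}^{-1}\|u-w\|_{s}$ valid for every $w\in V_{ms}$.

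I would then pick $w=\sum_{i,j}c_{ij}\psi_{j,ms}^{(i)}$, where the coefficients $c_{ij}$ come from the expansion $\tilde{u}=\sum_{i,j}c_{ij}\psi_{j}^{(i)}$ in the global multiscale basis. Applying the triangle inequality in both norms, for the $a$-norm I would estimate $\|u-\tilde{u}\|_{a}$ by Lemma~\ref{lemma:projection} and $\|\tilde{u}-w\|_{a}\le C(k+1)E^{1/2}\|\tilde{u}\|_{s}$ by combining Lemma~\ref{lemma:psi} with the pointwise decay from Lemma~\ref{lemma:decay}. The crucial observation for the $s$-norm pieces is that both $u-\tilde{u}$ and $\tilde{u}-w$ lie in $\tilde{V}$: the former is recorded at the end of Section~\ref{sec:basis}, while the latter follows from $\pi(\psi_{j,ms}^{(i)})=\pi(\psi_{j}^{(i)})=\phi_{j}^{(i)}$ (normalizing so that $\|\phi_{j}^{(i)}\|_{s}=1$), so that $\pi(\tilde{u})=\pi(w)$. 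The spectral estimate $\|v\|_{s}\le \Lambda^{-1/2}\|v\|_{a}$ for $v\in\tilde{V}$ (the same device used in Lemma~\ref{lemma:projection}) then converts each $s$-norm bound into an $a$-norm bound at the cost of an extra factor $\Lambda^{-1/2}$.

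Chaining these estimates yields the advertised general inequality, with the common prefactor $(1+H\kappa_{0}^{-1}\Lambda^{-1/2})$ emerging naturally from grouping the diffusive and convective contributions. For the simplified rate I would choose the number of oversampling layers $k=\mathcal{O}\bigl(\log(\beta_{1}\kappa_{1}^{1/2}/(H\kappa_{0}^{1/2}))\bigr)$ so that the exponential decay in the definition of $E=8D^{2}(1+\Lambda^{-1})(1+\Lambda^{1/2}/(2D^{1/2}))^{1-k}$ swallows the polynomial growth $(1+k)$ together with the contrast-dependent constant hidden in $\|\tilde{u}\|_{s}$, leaving only the term $CH\Lambda^{-1/2}(\cdots)$; combined with the assumption $H\kappa_{0}^{-1}\Lambda^{-1/2}\le C$, the proof concludes.

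The main obstacle I expect is the careful bookkeeping of the convection contribution: the weighted Cauchy--Schwarz step must deliver exactly $H\kappa_{0}^{-1}\|\cdot\|_{s}$ so that no spurious factors of contrast appear, and only the identity $\pi(\tilde{u})=\pi(w)$ places $\tilde{u}-w$ in $\tilde{V}$ and lets the spectral estimate convert $s$-norm errors into $a$-norm errors while preserving the clean $H\Lambda^{-1/2}$ rate. Everything else is then a systematic combination of the already established lemmas.
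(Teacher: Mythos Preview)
Your proposal is correct and follows essentially the same route as the paper's own proof: Galerkin orthogonality combined with the antisymmetry of the convection term gives the quasi-best approximation inequality $\|u-u_{ms}\|_{a}\le \|u-w\|_{a}+H\kappa_{0}^{-1}\|u-w\|_{s}$, the choice $w=\sum c_{j}^{(i)}\psi_{j,ms}^{(i)}$ together with the spectral estimate on $\tilde{V}$ and Lemmas~\ref{lemma:projection}--\ref{lemma:psi} yields the general bound, and the final choice of $k$ absorbs $\|\tilde{u}\|_{s}$. Your explicit verification that $\tilde{u}-w\in\tilde{V}$ via $\pi(\psi_{j,ms}^{(i)})=\pi(\psi_{j}^{(i)})=\phi_{j}^{(i)}$ is in fact a detail the paper uses implicitly but does not spell out.
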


\begin{proof}

Since $\bm{\beta}$ is divergence free, it holds that
\begin{equation*}
\|u-u_{ms}\|_a^2 = a(u-u_{ms},u-u_{ms})
= a(u-u_{ms},u-u_{ms}) + \int_{\Omega} \bm{\beta}\cdot \nabla(u-u_{ms}) \, (u-u_{ms}).
\end{equation*}
For any $v\in V_{H}$, we can infer from Galerkin orthogonality that
\begin{equation}
\begin{split}
\|u-u_{ms}\|_a^2& = a(u-u_{ms},u-v) + \int_{\Omega} \bm{\beta}\cdot \nabla(u-u_{ms}) \, (u-v)\\
&\leq \|u-u_{ms}\|_a\|u-v\|_a+H\kappa_0^{-1}\|u-u_{ms}\|_a\|u-v\|_s.
\end{split}
\label{eq:uH}
\end{equation}
We assume that $\tilde{u}$ obtained from \eqref{eq:projection} can be written as $\tilde{u}=\sum_{i=1}^N\sum_{j=1}^{l_i}c_j^{(i)}\psi_j^{(i)}$. Then we define a function $v=\sum_{i=1}^N\sum_{j=1}^{l_i}c_j^{(i)}\psi_{j,ms}^{(i)}\in V_{ms}$.
We can infer from the spectral problem \eqref{eq:spectral} that
\begin{align*}
\|u-v\|_s\leq \|u-\tilde{u}\|_s+\|\tilde{u}-v\|_s\leq \Lambda^{-1/2} \Big(\|u-\tilde{u}\|_a+\|\tilde{u}-v\|_a\Big),
\end{align*}
which coupling with \eqref{eq:uH} yields
\begin{align*}
\|u-u_{ms}\|_a&\leq \|u-v\|_a+H\kappa_0^{-1}\Lambda^{-\frac{1}{2}} \Big(\|u-\tilde{u}\|_a+\|\tilde{u}-v\|_a\Big)\\
&\leq \|u-\tilde{u}\|_a+\|\tilde{u}-v\|_a+H\kappa_0^{-1}\Lambda^{-\frac{1}{2}} \Big(\|u-\tilde{u}\|_a+\|\tilde{u}-v\|_a\Big).
\end{align*}
Then it follows from Lemma~\ref{lemma:projection} that
\begin{align*}
\|u-u_{ms}\|_a&\leq (1+H\kappa_0^{-1}\Lambda^{-\frac{1}{2}} )\|u-\tilde{u}\|_a+(1+H\kappa_0^{-1}\Lambda^{-\frac{1}{2}} )\|\tilde{u}-v\|_a\\
&\leq \Big((1+H\kappa_0^{-1}\Lambda^{-\frac{1}{2}})H\Lambda^{-\frac{1}{2}}(
\|\kappa^{-\frac{1}{2}} |\bm{\beta}|^{-1} f\|_{L^2(\Omega)}+\|\kappa^{-\frac{1}{2}}\nabla u\|_{L^2(\Omega)})\\
&\;+(1+H\kappa_0^{-1}\Lambda^{-\frac{1}{2}} )\|\tilde{u}-v\|_a\Big).
\end{align*}
We can infer from Lemmas~\ref{lemma:decay} and \ref{lemma:psi} that
\begin{align*}
\|\tilde{u}-v\|_a^2&=\|\sum_{i=1}^N\sum_{j=1}^{l_i}c_j^{(i)}(\psi_j^{(i)}-\psi_{j,ms}^{(i)})\|_a^2\leq C (1+k)^2\sum_{i=1}^N\|\sum_{j=1}^{l_i}c_j^{(i)}(\psi_j^{(i)}-\psi_{j,ms}^{(i)})\|_a^2\\
&\leq C (1+k)^2E\sum_{i=1}^N\|\sum_{j=1}^{l_i}c_j^{(i)}\phi_j^{(i)}\|_s^2\\
&\leq C (1+k)^2E\|\tilde{u}\|_s^2.
\end{align*}
Therefore
\begin{align*}
\|u-u_{ms}\|_a&\leq C\Big( (1+H\kappa_0^{-1}\Lambda^{-\frac{1}{2}})H\Lambda^{-\frac{1}{2}}(
\|\kappa^{-\frac{1}{2}} |\bm{\beta}|^{-1} f\|_{L^2(\Omega)}+\|\kappa^{-\frac{1}{2}}\nabla u\|_{L^2(\Omega)})\\
&\;+(1+H\kappa_0^{-1}\Lambda^{-\frac{1}{2}} )(1+k)E^{\frac{1}{2}}\|\tilde{u}\|_s\Big).
\end{align*}
%
Now we show the error estimate for $\|\tilde{u}\|_s$. By the definition of $s$-norm, we have
\begin{align}
\|\tilde{u}\|_s^2= H^{-2}\int_{\Omega}\kappa |\bm{\beta}|^2|\tilde{u}|^2\;dx\leq H^{-2}\kappa_1\beta_1^2\|\tilde{u}\|_{L^2(\Omega)}^2\leq H^{-2}\kappa_1\kappa_0^{-1}\beta_1^2\|\tilde{u}\|_a^2\label{eq:tildeus}.
\end{align}
Recall that $\nabla\cdot \bm{\beta}=0$, thereby integration by parts yields $(\bm{\beta}\cdot\nabla \tilde{u}, \tilde{u})=0$. Hence, we have
\begin{align*}
\|\tilde{u}\|_a^2=(\kappa\nabla\tilde{u},\nabla \tilde{u})=(f,\tilde{u})-(\bm{\beta}\cdot\nabla \tilde{u}, \tilde{u})=(f,\tilde{u})\leq H\|\kappa^{-\frac{1}{2}}|\bm{\beta}|^{-1}f\|_{L^2(\Omega)}\|\tilde{u}\|_s,
\end{align*}
which together with \eqref{eq:tildeus} yields
\begin{align*}
\|\tilde{u}\|_s\leq H^{-2}\kappa_1\kappa_0^{-1}\beta_1^2 H \|\kappa^{-\frac{1}{2}} |\bm{\beta}|^{-1} f\|_{L^2(\Omega)}\leq H^{-1}\kappa_1\kappa_0^{-1} \beta_1^2 \|\kappa^{-\frac{1}{2}} |\bm{\beta}|^{-1} f\|_{L^2(\Omega)}.
\end{align*}
In order to deliver convergent solution, we require that $H\kappa_0^{-1}\Lambda^{-\frac{1}{2}} \leq C $ and  $H^{-1}\beta_1^2(1+k)E^{\frac{1}{2}}\kappa_1\kappa_0^{-1}$ is bounded, i.e.,
\begin{align*}
H^{-1}\beta_1^2(1+k)E^{\frac{1}{2}}\kappa_1\kappa_0^{-1}=\mathcal{O}(1).
\end{align*}
Thus, we need to take $k=\mathcal{O}(log(\frac{\beta_1\kappa_1^{\frac{1}{2}}}{H\kappa_0^{\frac{1}{2}}}))$. Therefore, the proof is completed.

\end{proof}

%
%
%
%
%
%
%
%
%

\section{Numerical experiments}\label{sec:numerical}
In this section we present several numerical experiments to test the performances of our method. Specially, we will study the influences of the number of oversampling layers and the number of basis functions on the error of the multiscale solution. Then the convergence behavior with respect to coarse meshsize $H$ will also be investigated.
In the following examples, we use $N_{ov}$ to denote the number of oversampling layers and $N_b$ to denote the number of basis functions chosen in the spectral problem. In addition, the fine meshsize is defined to be $\frac{\sqrt{2}}{400}$. In the simulations given below, we take $\Omega=(0,1)^2$. In addition, we define the following errors for later use.
\begin{align*}
e_{L^2}=\frac{\|u_h-u_{ms}\|_{L^2(\Omega)}}{\|u_h\|_{L^2(\Omega)}},\quad e_{H^1}=\frac{\|\kappa^{1/2}\nabla(u_h-u_{ms})\|_{L^2(\Omega)}}{\|\kappa^{1/2}\nabla u_h\|_{L^2(\Omega)}},
\end{align*}
where $u_h$ is the fine scale solution obtained from standard conforming finite element method.

\subsection{Example 1}\label{ex1}

In our first example we consider $\kappa=1/200$, the velocity field is given by
\begin{align*}
\bm{\beta}=(\cos(18\pi y)\sin(18\pi x), -\cos(18\pi x)\sin(18\pi y))^T
\end{align*}
and the source term $f$ is given by $f=1$. The fine scale solution and downscale solution with $H=1/20, N_{ov}=3, N_b=5$ are depicted in Figure~\ref{ex1:solution}. We study the effects of the number of oversampling layers and the number of basis functions, and the results are plotted in Figure~\ref{fig:ex-errors}. It can be observed that the accuracy will get better as the number of oversampling layers and the number of basis functions increase. Further, when enough number of oversampling layers and basis functions are given, the error tends to be a constant. Then we show the convergence behavior with respect to the coarse meshsize and the results are reported in Table~\ref{table:con}. We can see that the sequence of solutions converge as the coarse meshsize converges.

%
%
%
%
%
%
%
%
%
%
%
%
%
%
%
%
%
%
%
%


%
%
%
%
%
%
%
%
%

\begin{figure}[t]
\centering
\includegraphics[width=0.32\textwidth]{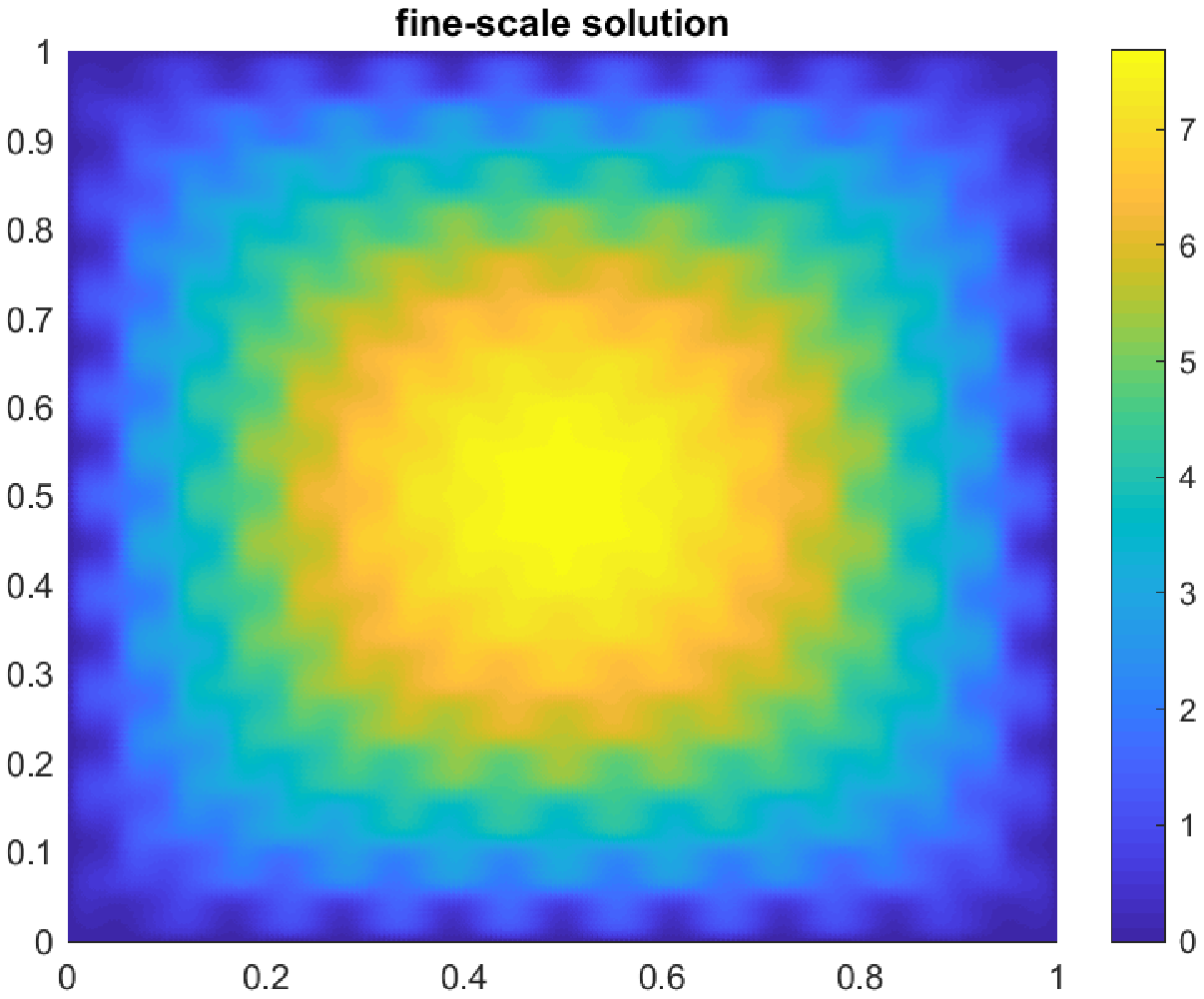}
\includegraphics[width=0.32\textwidth]{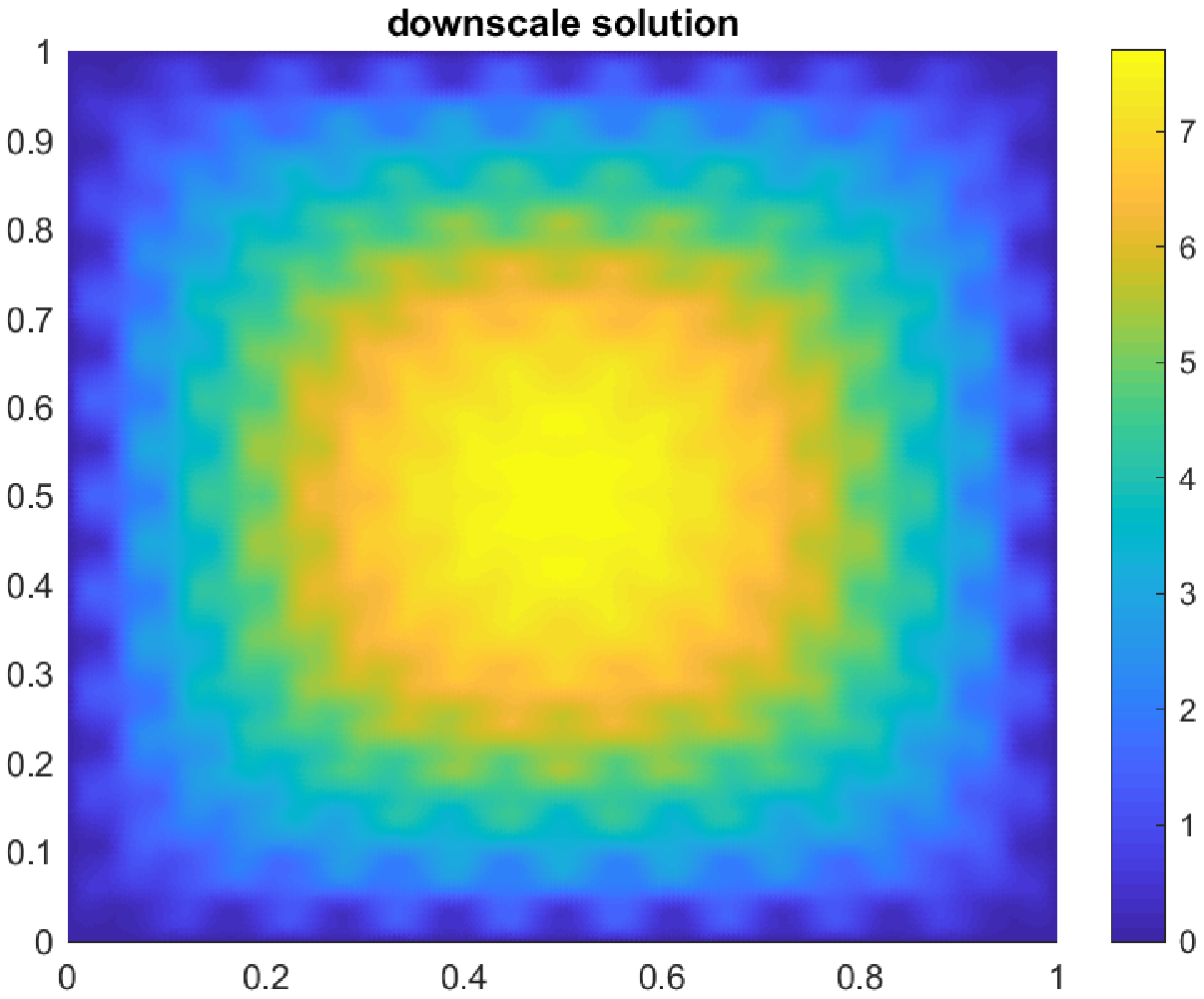}
\caption{Example~\ref{ex1}. Profile of fine scale solution (left) and downscale solution with $H=1/20, N_{ov}=3, N_b=5$ (right).}
\label{ex1:solution}
\end{figure}

\begin{figure}[t]
\centering
\includegraphics[width=0.32\textwidth]{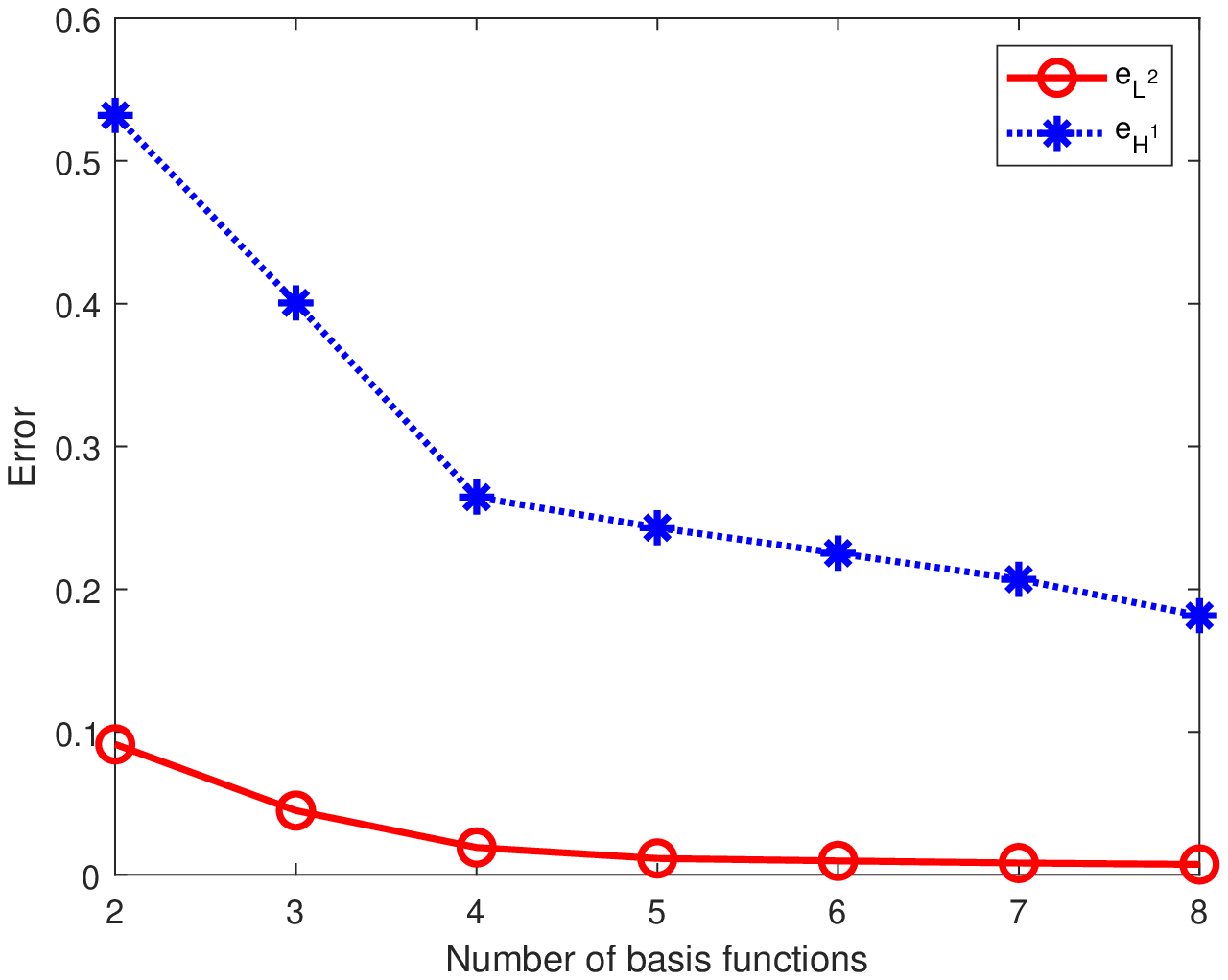}
\includegraphics[width=0.32\textwidth]{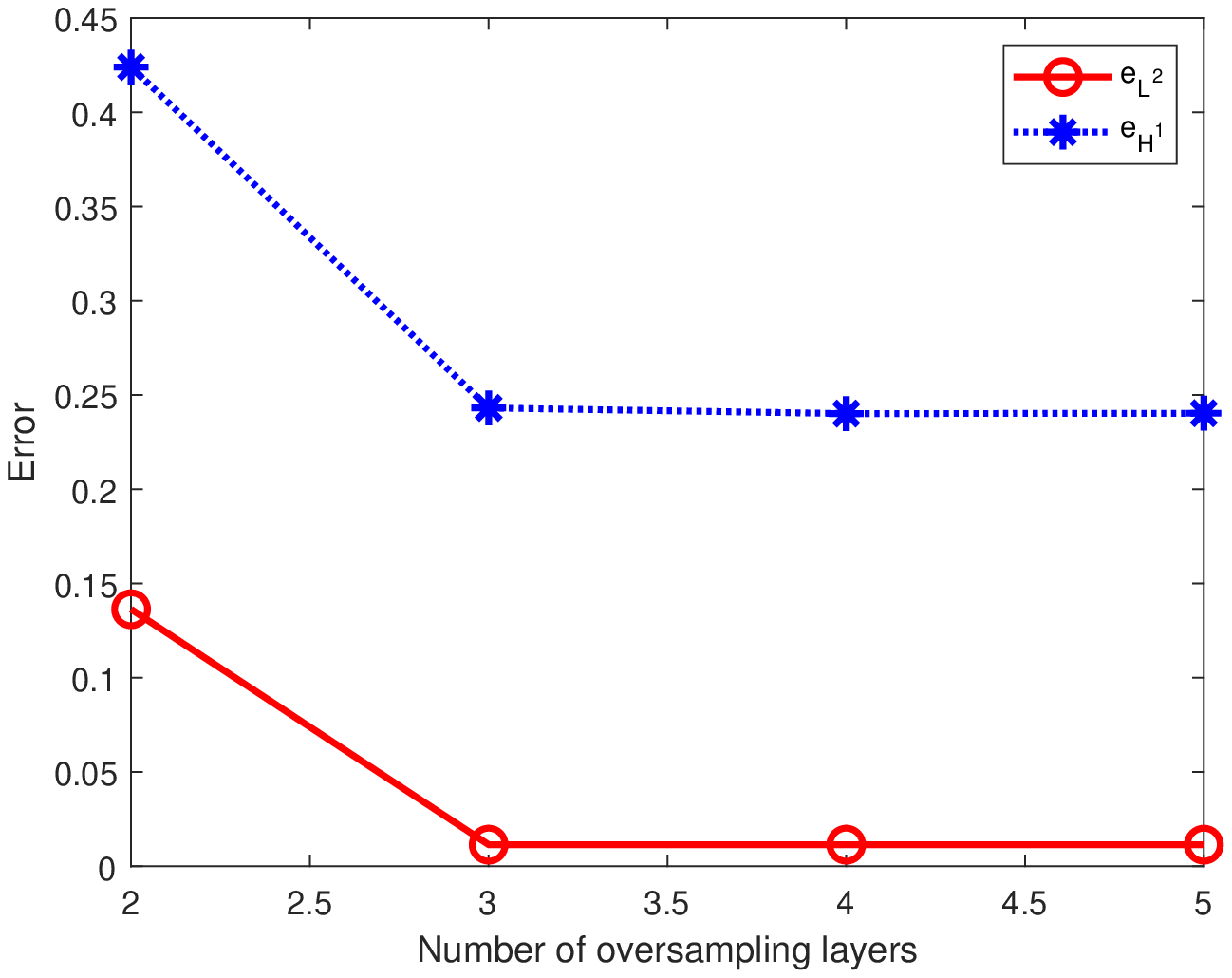}
\caption{Example~\ref{ex1}. Errors with different numbers of basis functions, $H=1/10$, $N_{ov}=2$ (left) and errors with different number of oversampling layers, $H=1/20$, $N_b=5$ (right).}
\label{fig:ex-errors}
\end{figure}

\begin{table}[t]
\centering
\begin{tabular}{lllll}
\hline
$N_b$& H & $N_{ov}$ & $e_{L^2}$ & $e_{H^1}$\\
\hline
5 & 1/10 & 2  &0.1055 & 0.6244 \\
5 & 1/20 & 3  &0.0114  & 0.2431\\
5 & 1/40 & 4  &0.0022  & 0.0846 \\
\hline
\end{tabular}
\caption{Convergence behavior for Example~\ref{ex1}.}
\label{table:con}
\end{table}

\subsection{Example 2}\label{ex2}

In this example we take $\bm{\beta}=\mbox{curl}(b),b=-\sin(80 \pi x)\cos(40\pi y)-250 x+150 y$ and $\kappa=1$. The source term $f$ is defined by
%
%
%
%
%
%
%
%
%
%
%
%
%
%
%
%
%
%
%
%
%
%
%
\begin{align}
f=
\begin{cases}
1\quad 0\leq x,y\leq 0.1,\\
0\quad \mbox{otherwise}.
\end{cases}
\label{eq:f}
\end{align}

Figure~\ref{fig:ex2-sol} shows the fine scale solution and downscale solution with $H=1/20, N_{ov}=3, N_b=5$. The effects of the number of oversampling layers and the number of basis functions on the relative $L^2$ error and relative $H^1$ error are displayed in Figure~\ref{fig:ex2-error}. Similarly, we can observe that increasing the number of oversampling layers and the number of basis functions will decrease the error, also when $N_b$ and $N_{ov}$ are big enough, the error will not decrease anymore. Finally, we show the convergence behavior in Table~\ref{table:ex3-con} and we can observe similar performances to Example~\ref{ex1}.


\begin{figure}[t]
\centering
\includegraphics[width=0.32\textwidth]{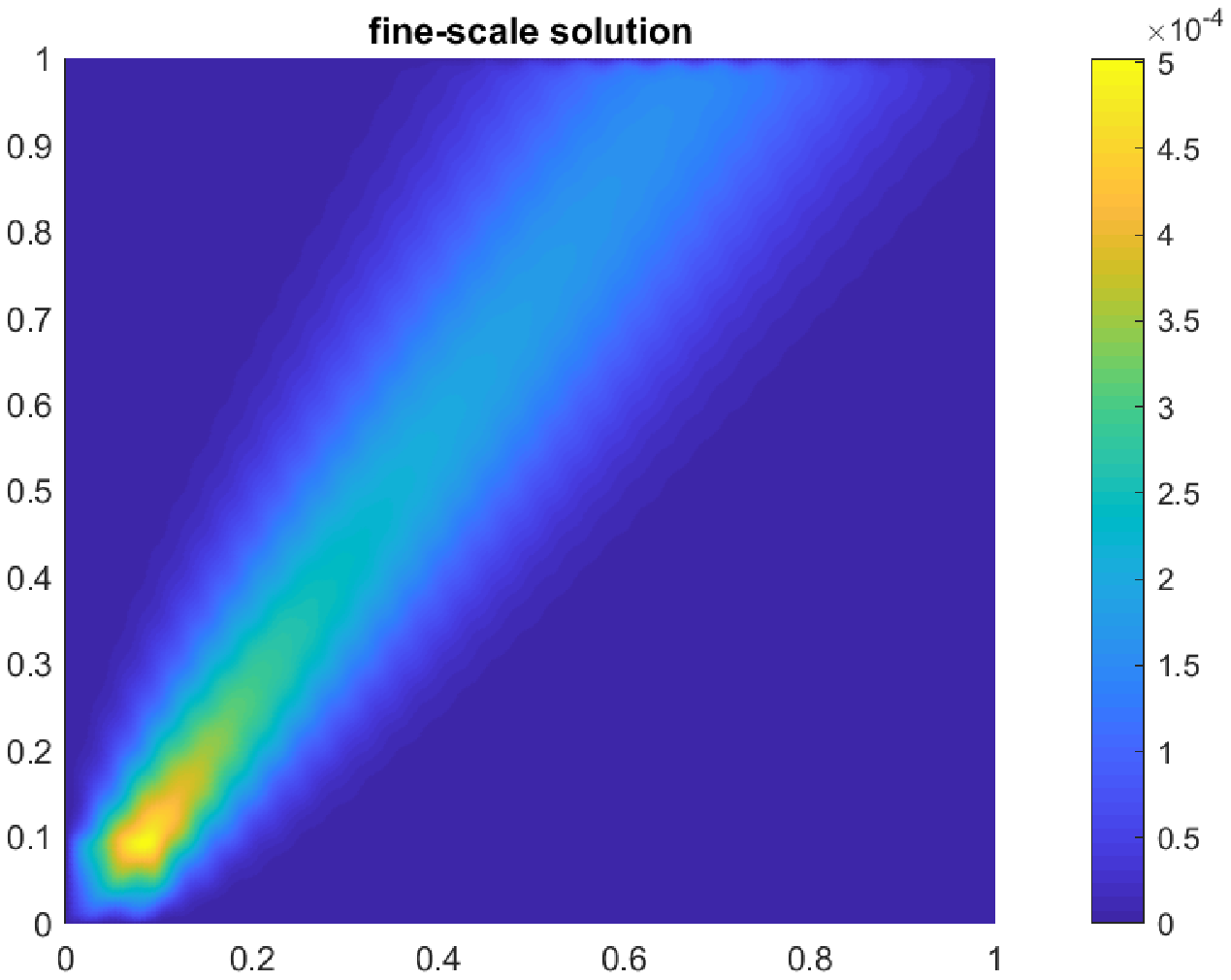}
\includegraphics[width=0.32\textwidth]{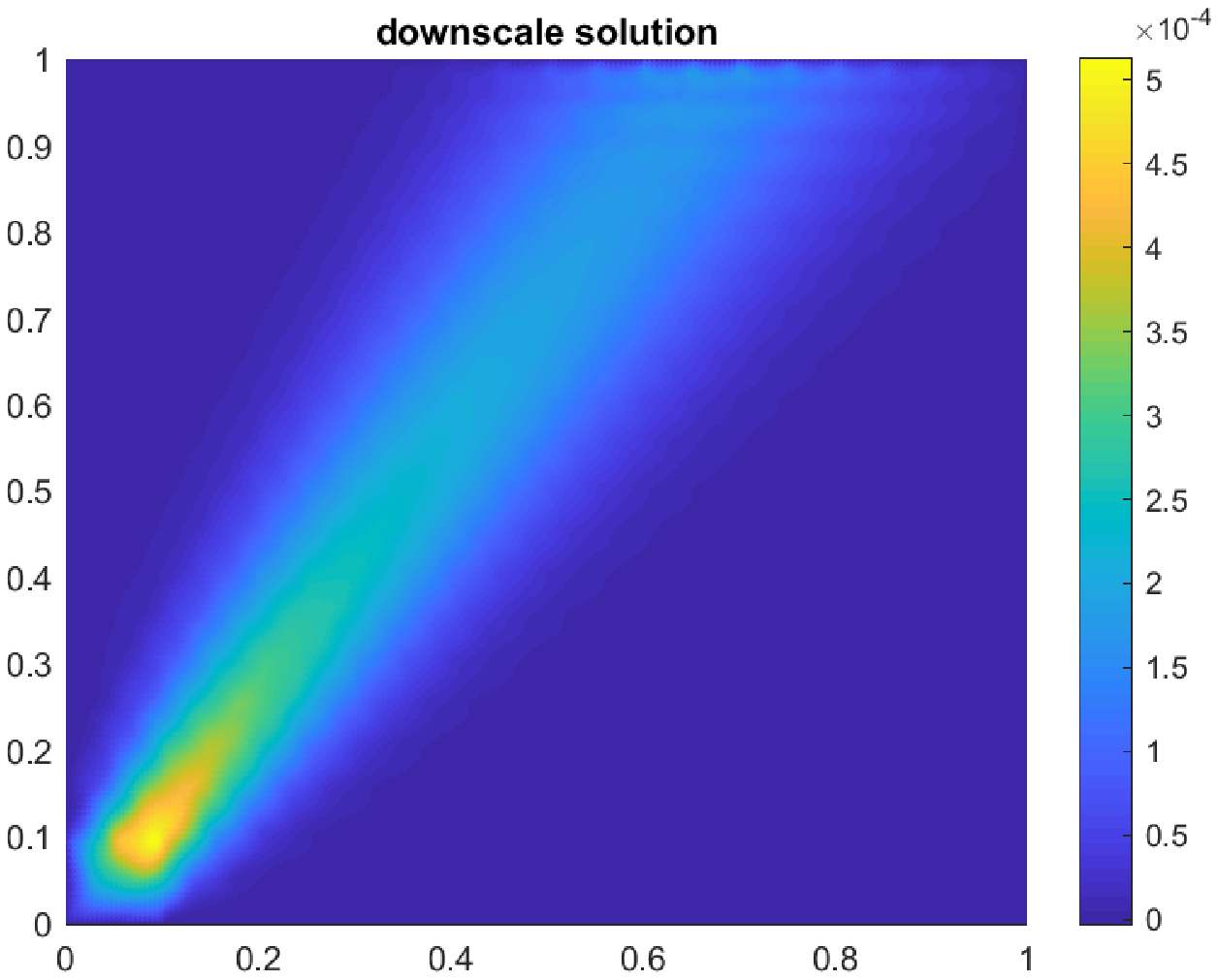}
\caption{Example~\ref{ex2}. Profile of fine scale solution (left) and downscale solution with $H=1/20, N_{ov}=3, N_b=5$ (right).}
\label{fig:ex2-sol}
\end{figure}

\begin{figure}[t]
\centering
\includegraphics[width=0.32\textwidth]{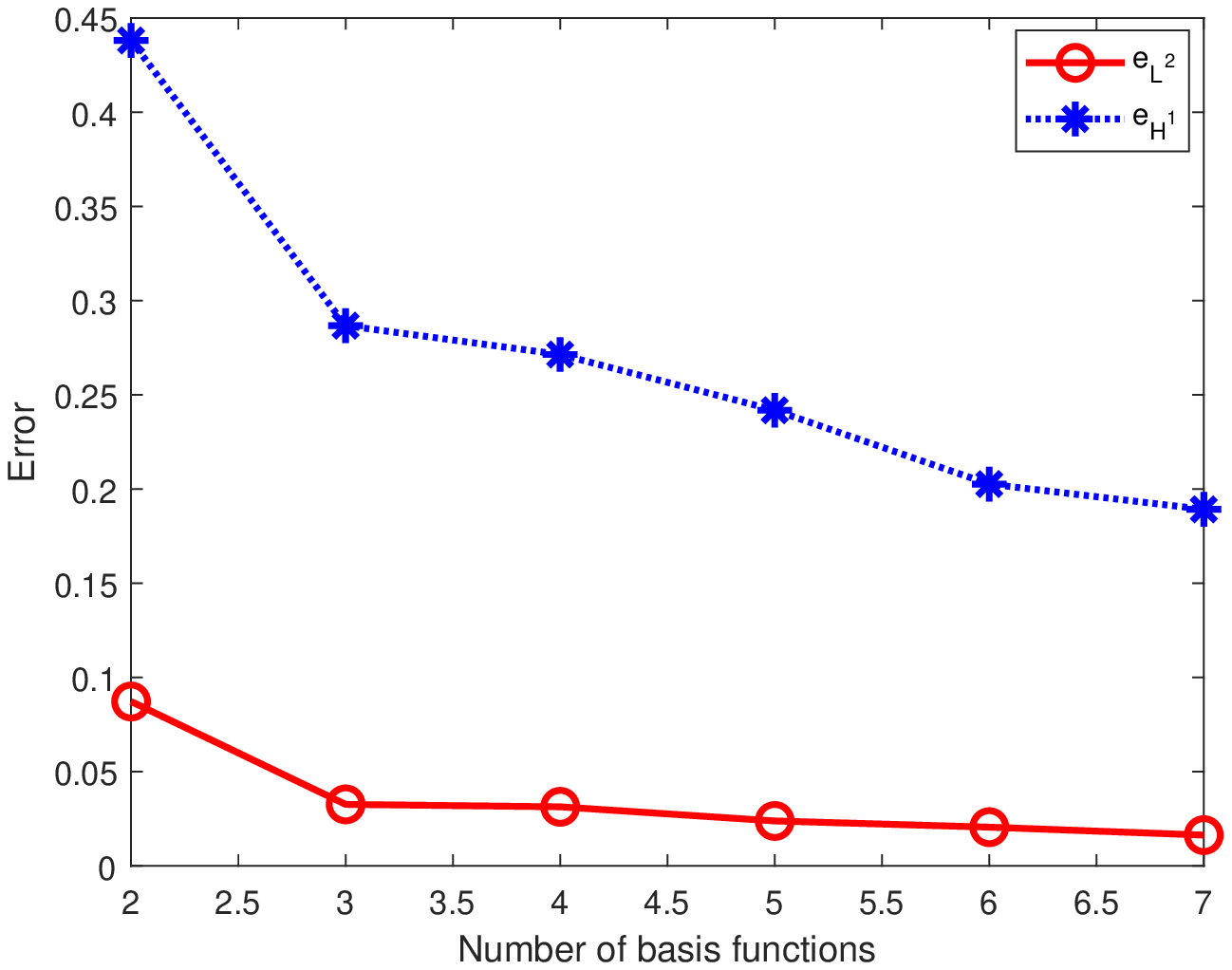}
\includegraphics[width=0.32\textwidth]{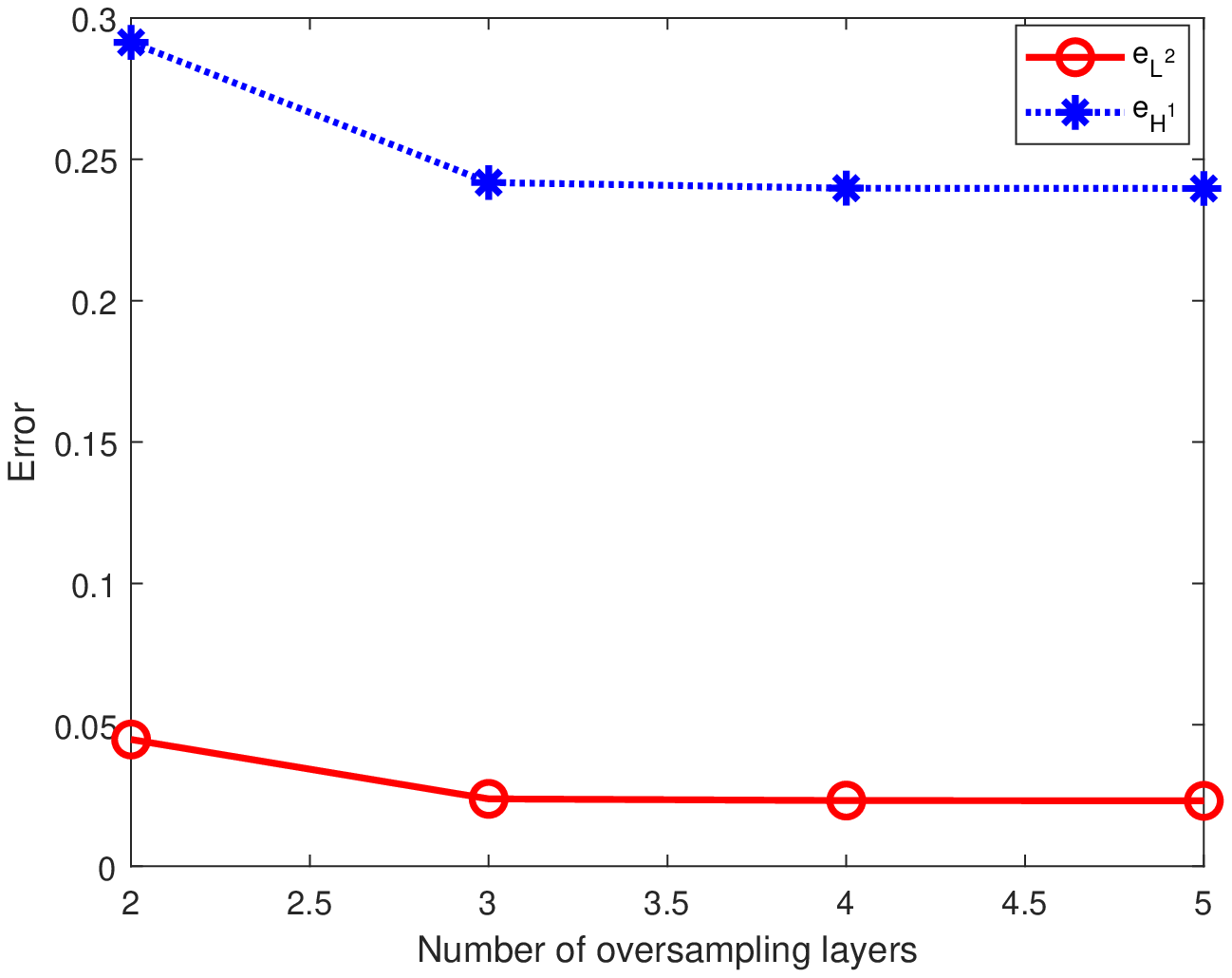}
\caption{Example~\ref{ex2}. Errors with different numbers of basis functions, $H=1/20$, $N_{ov}=3$ (left) and errors with different numbers of oversampling layers, $H=1/20$, $N_b=5$ (right).}
\label{fig:ex2-error}
\end{figure}

\begin{table}[t]
\centering
\begin{tabular}{lllll}
\hline
$N_b$& H & $N_{ov}$ & $e_{L^2}$ & $e_{H^1}$\\
\hline
5 & 1/10 & 2  &0.0928  &  0.4183\\
5 & 1/20 & 3  &0.0238  &  0.2418\\
5 & 1/40 & 4  &0.0030  &  0.0725\\
\hline
\end{tabular}
\caption{Convergence behavior for Example~\ref{ex2}.}
\label{table:ex3-con}
\end{table}

\subsection{Example 3}\label{ex:SPE}
In this example we consider a more challenging case, where the velocity field $\bm{\beta}$ is determined by a Darcy flow in a high contrast medium.
In particular, the velocity field $\bm{\beta}$ and pressure field $p$ is determined by the following system:
\begin{align}
\begin{alignedat}{2}\label{eq:darcy}
\bm{\beta}&=-K \nabla p&& \quad \mbox{in}\;\Omega, \\
\nabla\cdot \bm{\beta} &=q&&\quad \mbox{in}\;\Omega,\\
p& =0&& \quad \mbox{on}\;\partial \Omega,
\end{alignedat}
\end{align}
where we set
\begin{align*}
q=
\begin{cases}
1\qquad 0\leq x,y\leq 0.1,\\
-1\quad   0.9 \leq x,y\leq 1,\\
0\qquad \mbox{otherwise}
\end{cases}
\end{align*}
and $f$ is defined to be the same as in \eqref{eq:f}. Here the heterogeneous field $K$ is defined by a SPE benchmark case \cite{Christie01} as is shown in Figure~\ref{ex:SPE}. We solve \eqref{eq:darcy} by using a lowest order Raviart-Thomas mixed finite element method and the profile for $\bm{\beta}$ is depicted in Figure~\ref{ex:SPE}.
In this case we set $\kappa=0.02$. The convergence behavior is shown in Table~\ref{Table:ex3-con}, and we can see that the sequences of solution converge as the meshsize gets smaller.

\begin{figure}[t]
\centering
\includegraphics[width=0.35\textwidth]{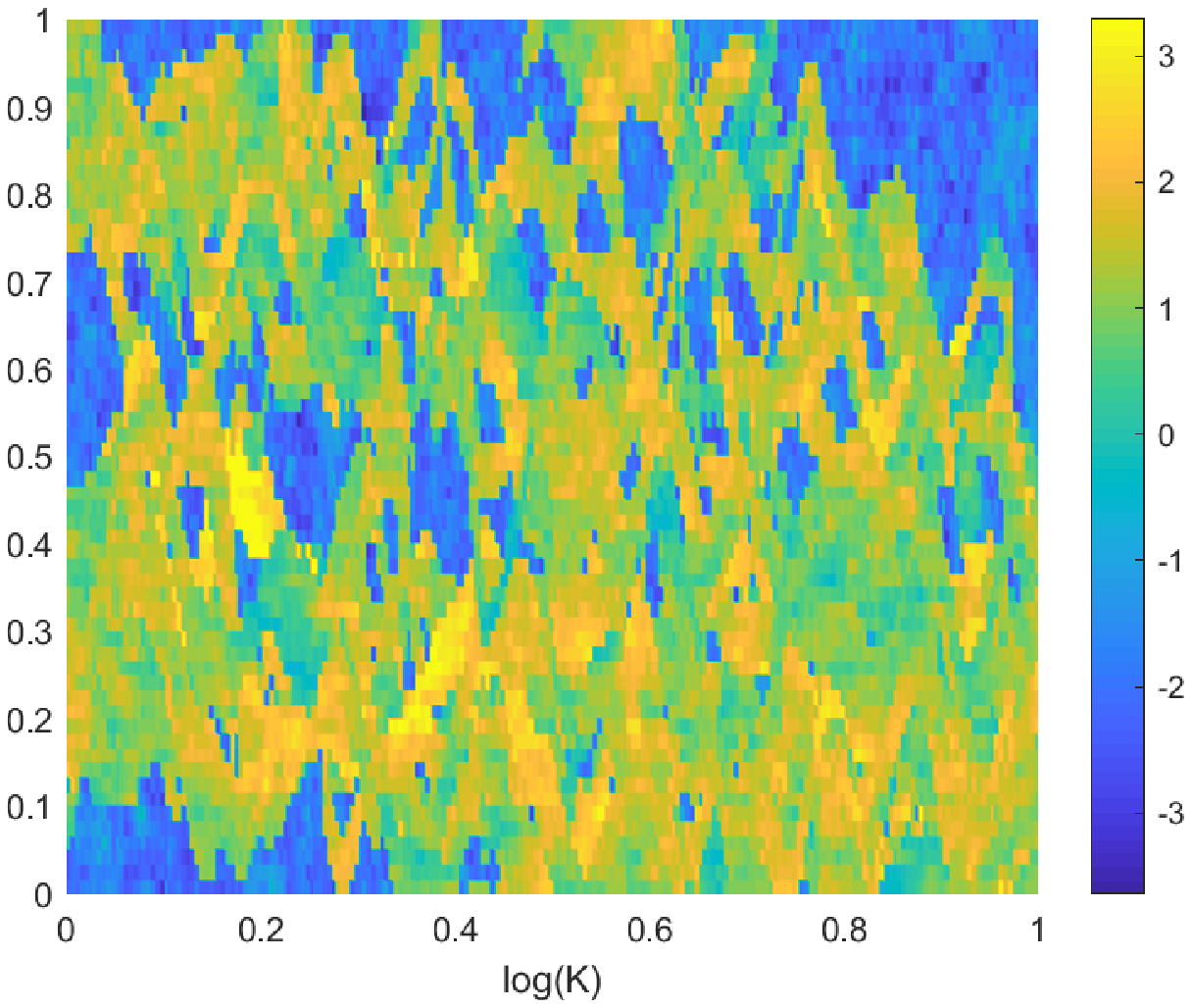}
\includegraphics[width=0.35\textwidth]{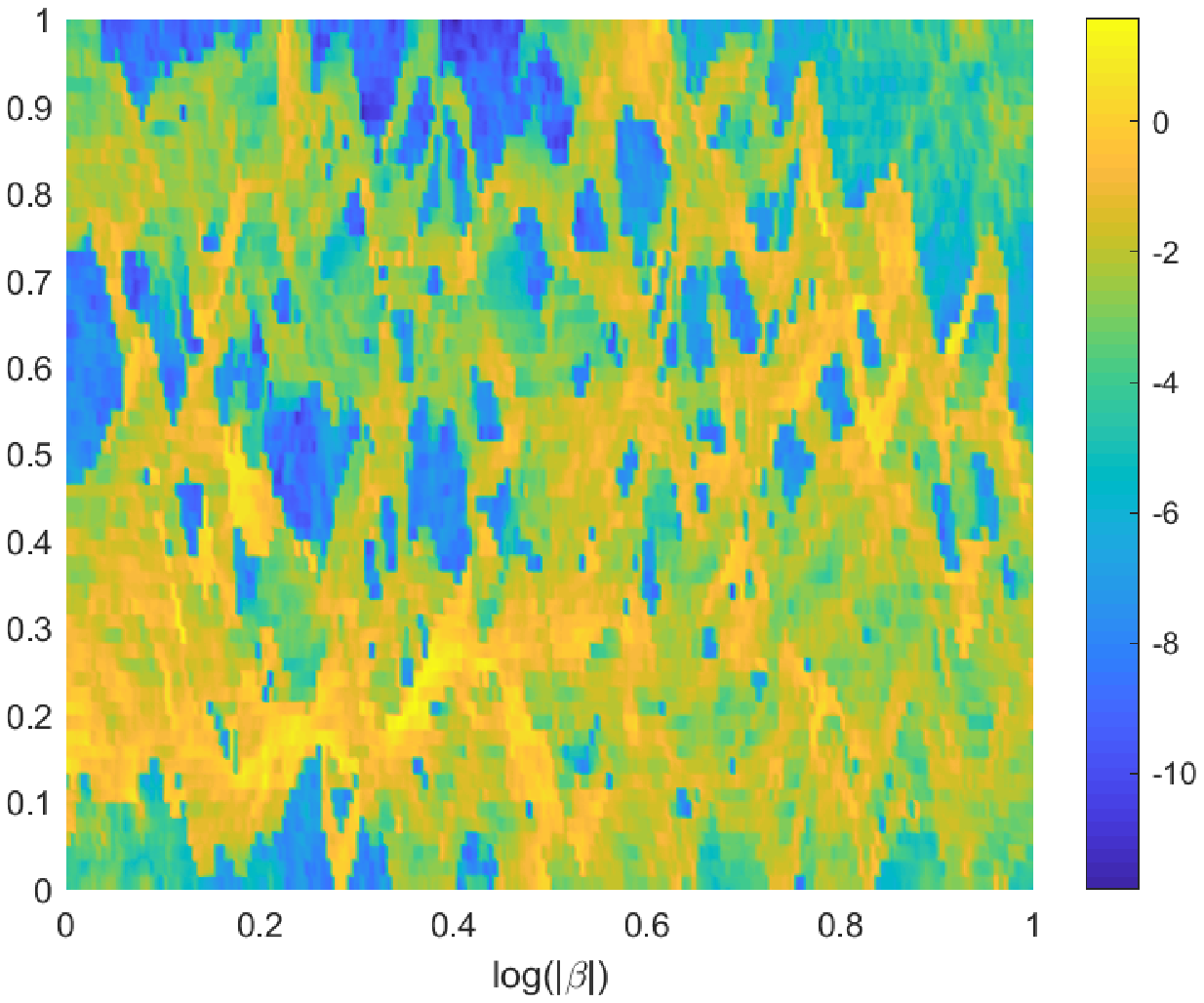}
\caption{Example~\ref{ex:SPE}. Profile of $K$ (left) and $|\bm{\beta}|$ (right) in log scale.}
\end{figure}

\begin{figure}[t]
\centering
\includegraphics[width=0.35\textwidth]{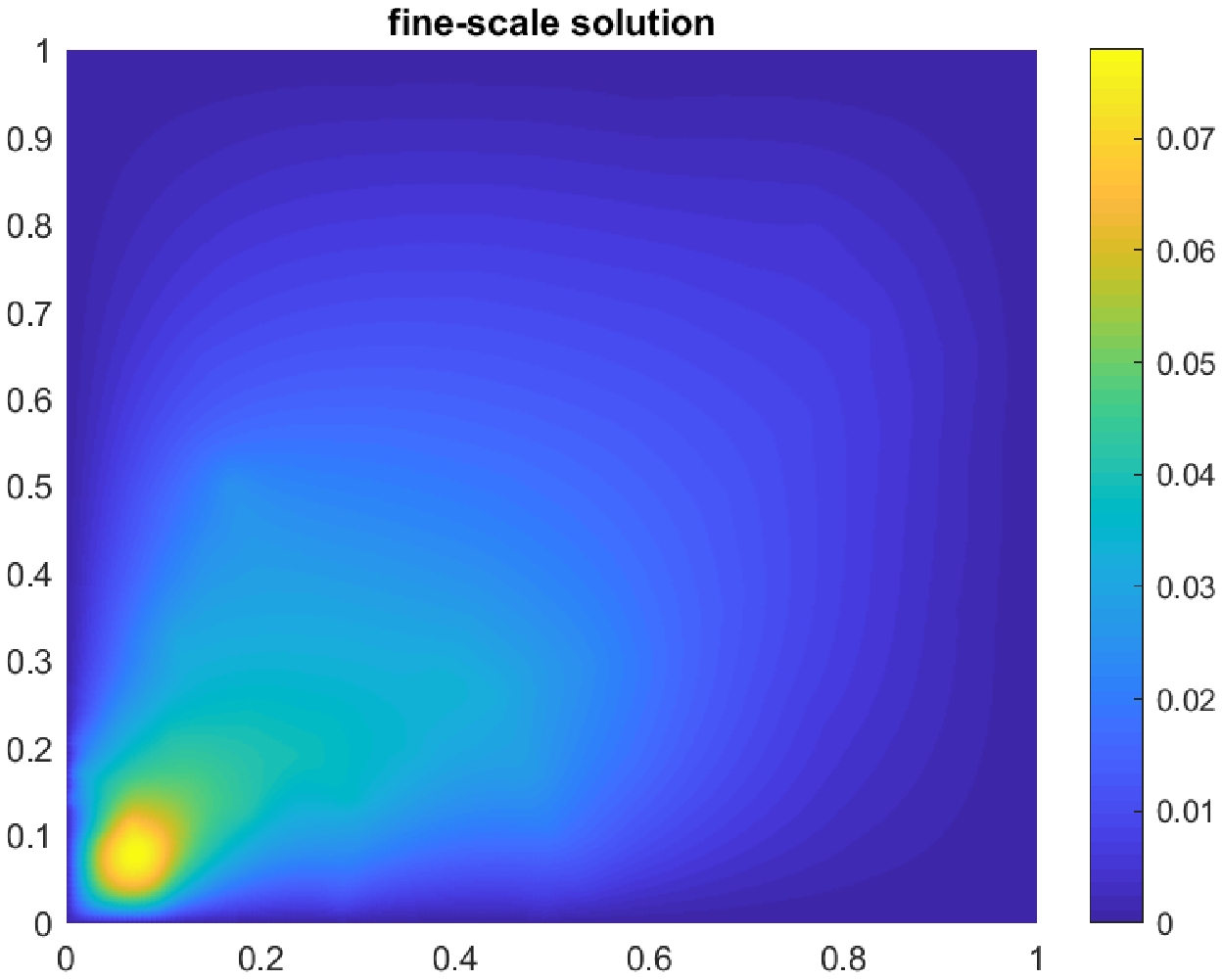}
\includegraphics[width=0.35\textwidth]{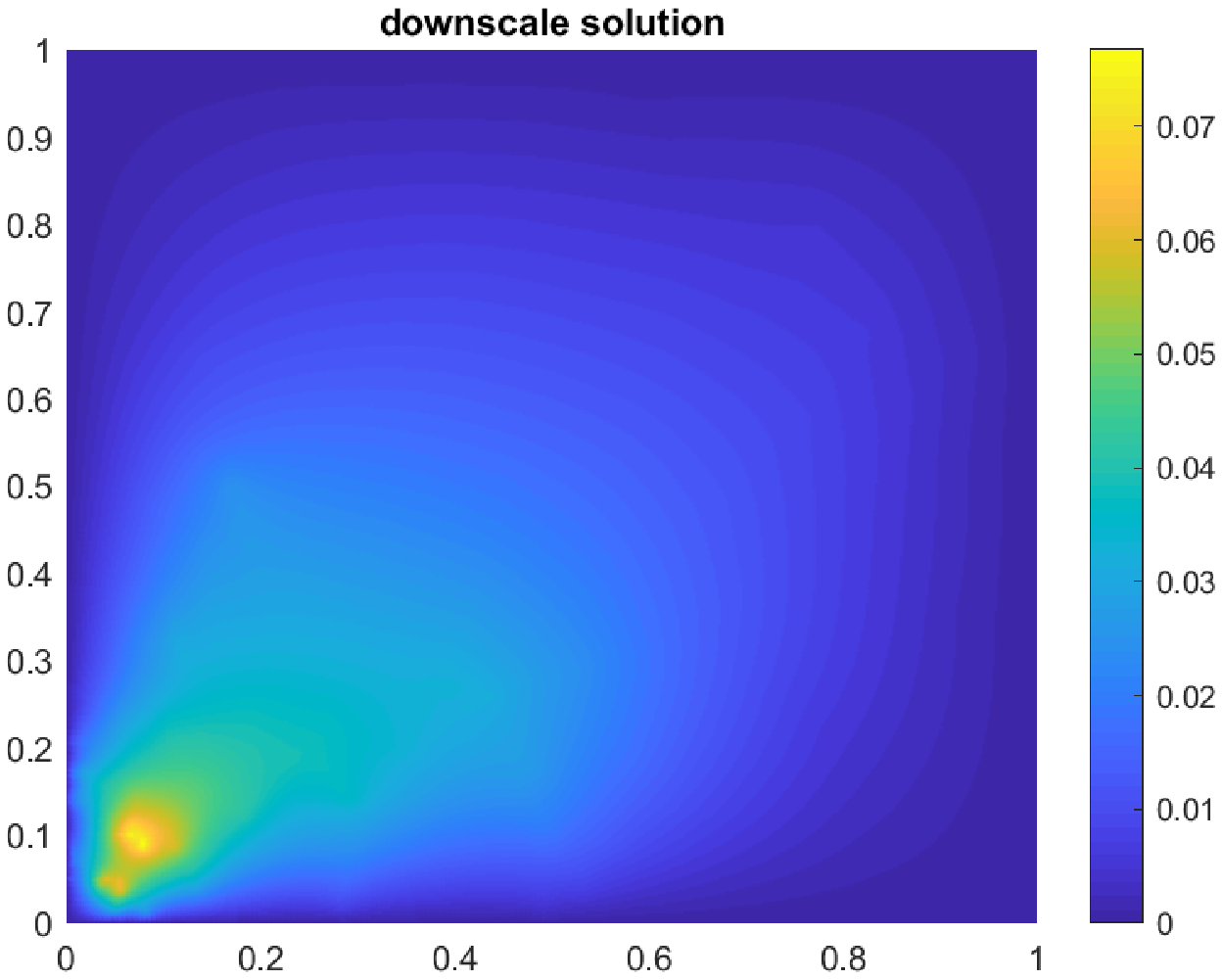}
\caption{Example~\ref{ex:SPE}. Profile of fine scale solution  (left) and downscale solution with $H=1/20, N_{ov}=5, N_{b}=5$ (right).}
\label{Table:ex3-con}
\end{figure}


\begin{table}[t]
\centering
\begin{tabular}{lllll}
\hline
$N_b$& H & $N_{ov}$ & $e_{L^2}$ & $e_{H^1}$\\
\hline
5 & 1/10 & 3  &0.1809  &  0.5990\\
5 & 1/20 & 4  &0.0679  &  0.3393\\
5 & 1/40 & 5  &0.0237  &  0.1503\\
\hline
\end{tabular}
\caption{Convergence behavior for Example~\ref{ex:SPE}.}
\label{SPE}
\end{table}

\section{Conclusion}\label{sec:conclusion}

In this paper we have developed constraint energy minimizing generalized multiscale finite element method for convection diffusion equation. The decay property of the multiscale basis functions is proved. In line of this, we prove the convergence of the multiscale solution. Our theories indicate that if the overampling layer is taken properly, then the resulting multiscale basis functions have a decay property. Several numerical experiments are presented to verify the performances of our method. In the future we aim to develop a novel method in the framework of CEM-GMsFEM to solve convection dominated diffusion problem that exhibits interior or boundary layers.

\section*{Acknowledgment}

The research of Eric Chung is partially supported by the Hong Kong RGC General Research Fund (Project numbers 14304719 and 14302620) and CUHK Faculty of Science Direct Grant 2020-21.

\bibliographystyle{plain}
\bibliography{reference}

\end{document}